\newcommand{\Llra}{\Longleftrightarrow}
\newcommand{\Lra}{\Longrightarrow}
\newcommand{\lra}{\longrightarrow}
\newcommand{\ra}{\rightarrow}
\newcommand{\E}{\exists}
\newcommand{\Pj}{\mb{P}}
\newcommand{\bd}{\mathbf}
\newcommand{\mb}{\mathbb}
\newcommand{\ol}{\overline}
\newcommand{\0}{\emptyset}
\newcommand{\Cl}{\operatorname{Cl}}
\newcommand{\dv}{\operatorname{div}}
\newcommand{\suml}{\sum\limits}
\newcommand{\M}{\ol{M}}
\newcommand{\Pic}{\operatorname{Pic}}
\newcommand{\prodl}{\prod\limits}
\newcommand{\B}{\textbf}
\newcommand{\Eff}{\operatorname{\overline{Eff}}}
\newcommand{\col}{\,|\,}
\newcommand{\Z}{\mb{Z}}
\newcommand{\R}{\mb{R}}
\newcommand{\bp}{\chi^{-1}}
\newcommand{\ap}{\phi_{*}^{-1}}
\newcommand{\I}{^{-1}}
\newcommand{\Aff}{\mb{A}}
\newcommand{\sm}{\smallsetminus}
\newcommand{\balpha}{\bar{\phi}}
\newcommand{\bbeta}{\bar{\chi}}
\newcommand{\z}{{\bf a}}
\newcommand{\zb}{ {\bf b } }
\newcommand{\Lbo}{\Lambda_{{\bf b}}^0}
\newcommand{\Lao}{\Lambda_{\z}^0}
\newcommand{\vphi}{\varphi}
\newcommand{\Diag}{\operatorname{Diag}}
\newcommand{\NS}{\operatorname{NS}}
\newtheoremstyle{mystyle}{}{}{\itshape}{}{\scshape}{.}{ }{}
\theoremstyle{mystyle}
\newtheorem{Theorem}{Theorem}[section]
\newtheorem{Proposition}[Theorem]{Proposition}
\newtheorem{Lemma}[Theorem]{Lemma}
\newtheorem{Corollary}[Theorem]{Corollary}
\newtheorem{Claim}[Theorem]{Claim}
\newtheorem{Conjecture}[Theorem]{Conjecture}
\newtheoremstyle{myreview}{}{}{}{}{\scshape}{.}{ }{}
\theoremstyle{myreview}
\newtheorem{Example}[Theorem]{Example}
\newtheorem{Remark}[Theorem]{Remark}
\newcounter{et}[Theorem]
\newcommand{\C}{\operatorname{C}}
\begin{document}

\title[Extremal divisors on moduli spaces of rational curves]{Extremal divisors on moduli spaces of rational curves with marked points}
\author{Morgan Opie}\thanks{ This research was supported by NSF grant DMS-1001344 (PI Jenia Tevelev). }
\begin{abstract}

We study effective divisors on $\M_{0,n}$, focusing on hypertree divisors introduced by Castravet and Tevelev, and the proper transforms of divisors on $\M_{1,n-2}$ introduced by Chen and Coskun. We relate these two types of divisors and exhibit divisors on $\ol{M}_{0,n}$ for $n \geq 7$ that furnish counterexamples to a conjectural description of the effective cone of $\M_{0,n}$ given by Castravet and Tevelev.\end{abstract}
\maketitle

%-------------------------------------BACKGROUND------------------------------------------------

\tableofcontents
\section{Introduction}\label{intro}
 The moduli space $M_{0,n}$ parameterizes equivalence classes of $n$ distinct marked points on $\mb{P}^1$ under the action of $PGL_2$. We will be primarily concerned with $\M_{0,n}$, the Deligne--Mumford compactification of $M_{0,n}$ by stable rational curves with $n$ marked points. The Deligne-Mumford compactification parameterizes nodal trees of $\mb{P}^1$'s with $n$ markings such that each component has at least $3$ ``special" points (markings or nodes), modulo automorphisms. 

\begin{figure}[h]\label{rationalcurves}
\center
\includegraphics[scale=.32]{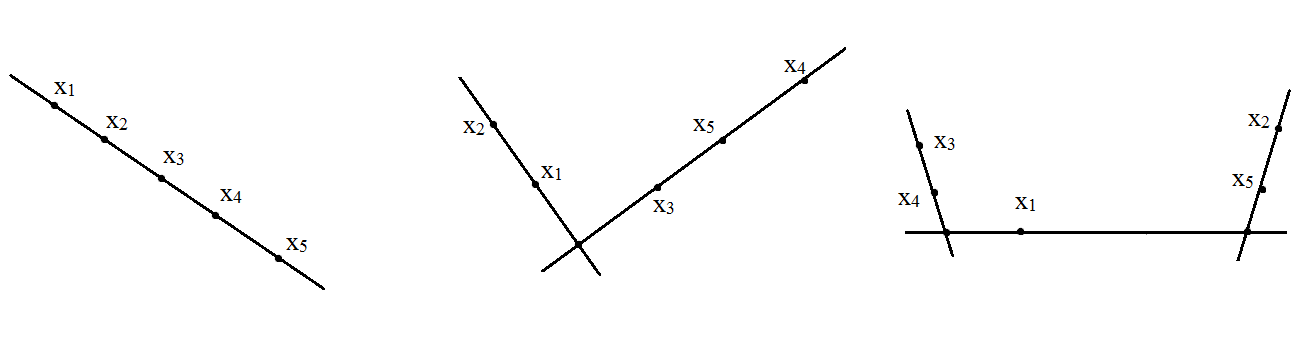}
\caption{Examples of stable rational curves, n=5.}
\end{figure} The locus $\M_{0,n} \sm M_{0,n}$ is a union of boundary divisors, defined as follows: for $I \subset \{1,\dots, n\}$ with both $I$ and $\{1,\dots,n\} \sm I$ of size at least two, the boundary divisor $\delta_I$ consists of classes of stable rational curves in $\ol{M}_{0,n} \sm M_{0,n}$ with a node separating the markings corresponding to indices in $I$ and $\{1,\dots,n\} \sm I$. \par

Significantly, $\M_{0,n}$ can be realized as an iterated blow-up of $\Pj^{n-3}$ via a Kapranov morphism. Any Kapranov morphism restricts to an ismorphism of $M_{0,n}$ with its image, and any boundary divisor is contracted by some Kapranov morphism.  Hence each boundary divisor generates an extremal ray of the effective cone of $\M_{0,n}$, and select boundary divisors together with the pull-back of a hyperplane class under a Kapranov morphism comprise free generators for the class group $\Cl(\M_{0,n})$ \cite{K}. We will use these Kapranov generators throughout the paper.  

\par In \ref{basics}, we describe a method of specifying divisors on $\M_{0,n}$ via polynomials in $n$ variables. We discuss how to compute the classes of these divisors, and include Macaulay2 code to compute classes. While useful for checking results on $\M_{0,n}$ with $n\leq 10$, the code is not practical for large $n$. 

In \ref{ht}, we recall the definitions of hypertrees and hypertree divisors from \cite{CT}. A major result of \cite{CT} is that hypertree divisors corresponding to ``irreducible" hypertrees are exceptional divisors of some birational contraction, and hence generate extremal rays of the effective cone of $\M_{0,n}$. In \cite{CT}, it is further speculated that 

\begin{Conjecture}\label{conj} The effective cone of $\ol{M}_{0,n}$ is generated by boundary divisors 
and by divisors parameterized by irreducible hypertrees and the pull-backs of these divisors under forgetful morphisms. 
\end{Conjecture}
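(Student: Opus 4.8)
This is a conjecture rather than a theorem, and—as the abstract announces—one that the present paper eventually refutes; so what follows is the natural \emph{attempt} to prove it, with emphasis on the point at which such an attempt is forced to break down. Write $C$ for the cone in $\Cl(\M_{0,n})\otimes\R$ generated by the proposed classes: the boundary divisors $\delta_I$, the divisors parametrized by irreducible hypertrees, and the pullbacks of the latter along the forgetful maps $\M_{0,n}\ra\M_{0,n'}$. Every generator is effective by construction, so $C\subseteq\Eff(\M_{0,n})$; the entire force of the conjecture is the reverse inclusion $\Eff(\M_{0,n})\subseteq C$. The ``easy half'' is to confirm that the generators really are extremal, and here the cited results do the work: boundary divisors are contracted by Kapranov morphisms by \cite{K}, and irreducible hypertree divisors span extremal rays by the main theorem of \cite{CT}.

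The decisive inclusion $\Eff\subseteq C$ I would attack dually. By the theorem of Boucksom--Demailly--P\u{a}un--Peternell, the (closure of the) pseudoeffective cone of divisors is dual to the closed cone $\overline{\mathrm{Mov}}_1(\M_{0,n})$ of movable curve classes; hence $\overline{\Eff}\subseteq C$ is equivalent to $C^{\vee}\subseteq\overline{\mathrm{Mov}}_1$. Concretely the plan is: first, compute the facets of $C$, equivalently the extremal generators of the dual cone $C^{\vee}$, using the explicit class formulas of \ref{basics} together with the hypertree database and the $S_n$-symmetry to keep the computation finite; second, for each resulting curve class exhibit an honest one-parameter family of rational curves sweeping out a dense subset of $\M_{0,n}$ and lying in that class—F-curves, their degenerations, and the ``vital'' curves coming from the hypertree combinatorics are the natural candidates; third, conclude $\Eff\subseteq C$ by duality. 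For small $n$ the whole verification is finite and can be automated with the Macaulay2 code of \ref{basics}.

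The main obstacle is the second step, and it is not a technicality: there is no mechanism guaranteeing that every extremal ray of $C^{\vee}$ is a movable-curve class, equivalently no reason the finite list generating $C$ should exhaust \emph{all} extremal effective divisors. For large $n$ the space $\M_{0,n}$ need not be a Mori dream space, so $\Eff$ may sprout extremal rays of a genuinely new type, and the dual program stalls precisely on the facet where such a ray hides. Locating one amounts to producing an effective class that violates a facet inequality of $C$—an effective divisor lying outside $C$—which is exactly what the paper accomplishes by comparing the Chen--Coskun divisors pulled back from $\M_{1,n-2}$ with the hypertree divisors. That comparison is therefore both the reason the proof plan cannot close and the source of the counterexamples.
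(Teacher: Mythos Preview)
You have correctly identified that this statement is a conjecture, not a theorem, and that the paper \emph{disproves} it rather than proves it; consequently there is no ``paper's own proof'' to compare against, and your proposal is really a meta-analysis of why a direct proof attempt should fail. That analysis is broadly reasonable: the inclusion $C\subseteq\Eff$ is immediate, the reverse inclusion is the whole content, and there is no a priori reason the dual cone $C^{\vee}$ should consist of movable classes.

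Where your account drifts from the paper is in the description of the actual refutation. You say the counterexample is found by ``producing an effective class that violates a facet inequality of $C$'', i.e.\ by computing facets and checking a linear inequality. That is not what \ref{counterex} does. The paper works on the primal side throughout: it takes the specific Chen--Coskun divisor $D_k=\Lambda_{(k,1,-1,\dots,-1)}$ on $\M_{0,k+5}$, constructs an irreducible covering family $\mathcal{F}$ of curves (obtained as proper transforms of plane sections of $\psi_1(D_k)$ through the point $p_2$), and computes $C\cdot D_k=-1$ for general $C\in\mathcal{F}$; this forces $D_k$ to span an extremal ray of $\Eff(\M_{0,k+5})$. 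A one-line degree comparison then shows that the $H$-coefficient of $\pi_{k+6}^{-1}(D_k)$ is $2k+1$, whereas any hypertree divisor or hypertree pullback has $H$-coefficient at most $k+2$, so for $k\ge 2$ the class $D_k$ cannot coincide with any of the conjectured generators. Extremality plus ``not proportional to any generator'' is what places $D_k$ outside $C$; no facet of $C$ is ever computed. So your high-level diagnosis is right, but the mechanism you attribute to the paper (dual/facet) should be replaced by the direct covering-family argument and the degree bound.
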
 
This motivates us to study hypertree divisors and their classes. We generalize a result of \cite{CT} to obtain polynomials specifying all hypertree divisors, and use our Macaulay2 program to compute all irreducible hypertree divisor classes on $\M_{0,n}$ for $6 \leq n \leq 10$. We then turn our attention to other effective divisors.

In \cite{CC}, Chen and Coskun construct divisors on $\M_{1,n}$ using $n$-tuples $(a_1,\dots,a_n)$ of integers such that $\sum a_i =0$. They show that if $\gcd(a_1,\dots,a_n)=1$, the divisor corresponding to the $n$-tuple is a rigid, extremal effective divisor. We examine the proper transforms of these divisors on $\M_{0,n+2}$ with respect to the clutching morphism that glues the two markings (we call these proper transforms Chen--Coskun divisors). We first find formulas for the classes of Chen--Coskun divisors, and then prove results relating Chen and Coskun and hypertree divisors. In particular, we show that the Chen--Coskun divisor associated to the $n$-tuple $(1,1,\dots,-1,-1,\dots)$ coincides with a particular hypertree divisor.  \par

Next, we investigate extremality of Chen--Coskun divisors. Such divisors need not be extremal, as examples in \ref{rex} show. However, in \ref{counterex}, we show that the Chen--Coskun divisor corresponding to $(n,1,-1,-1,-1,\dots)$ is always non-boundary extremal. Moreover, these particular Chen--Coskun divisors are neither a hypertree divisors nor pull-backs of hypertree divisors. Hence, they furnish a counterexamples to the conjecture \ref{conj}. \par

In \ref{conditions} we give a proof of a well-known criterion for extremality used in \ref{counterex}. In fact, we show that our criterion not only guarantees extremality, but also that the effective cone is ``not rounded" near the given divisor. No reference for this fact was found. \par

 In \ref{rex}, we further investigate extremality and rigidity of Chen and Coskun classes. We give criteria for rigidity and non-extremality via conditions on the $n$-tuple defining a Chen--Coskun divisor, and discuss implications for constructing ``large" families of extremal divisors on $\M_{0,n}$.  \\

\noindent\textsc{Acknowledgements.} I am grateful to Jenia Tevelev: this paper began as a summer 2013 REU project under his instruction at the University of Massachusetts Amherst, and his guidance was instrumental in its creation. I also want to thank Anna Kazanova, Tassos Vogiannou, and Julie Rana for helping me to learn Macaulay2 and debug programs; Ana-Maria Castravet for discussion during the 2013 Young Mathematicians Conference at the Ohio State University; Stephen Coughlan and Eduardo Cattani for  feedback on earlier drafts; and Angelo Felice Lopez for bringing to my attention a gap in my original proof of \ref{L2}. I would also like thank the referee for their suggestions, which I found invaluable.

\section{Divisors on $\M_{0,n}$ specified by polynomials}\label{general}\label{basics}
The following diagram is useful in studying divisors on $\ol{M}_{0,n}$: 

\begin{equation}\label{diag1}\begin{CD}
\mb{A}^{n+1}  @<\phi<< \Aff^1[n+1]  @>\chi>> \ol{M}_{0,n+1} @>\psi_r>>  \mb{P}^{n-2}  \\
 @.    @Vp_rVV                                 @ V\pi_r VV        \\
 @.       \Aff^{1}[n]        @>\chi>>                  \ol{M}_{0,n}       
\end{CD}\end{equation}

Above, $\psi_r$ is the Kapranov morphism in index $r$. A Kapranov morphism $\psi_{r}\colon \M_{0,k} \ra \Pj^{k-3}$ for $1 \leq i \leq n$ is constructed by fixing $k-1$ points in general position in $\mb{P}^{k-3}$, and labeling the points $p_t$ for $t \in \{1, \dots, k\} \sm \{r\}$. The relevant fact for our purposes is that given $I \subset \{1,\dots,k\} \sm \{r\}$, the image of $\delta_{I \cup \{r \}}$ under $\psi_r$ is the linear span $\langle p_t \rangle_{t \in I}$. For $|I| \leq k-4$, $\psi_r$ contracts the divisor $\delta_{I \cup \{r\}}$; these are the only exceptional divisors of $\psi_r$. This gives the choice of free generators for $\Cl(\M_{0,k})$ discussed in \ref{intro}, namely the classes of boundary divisors  $E_I^r:= \delta_{I \cup \{r\}}$ for $1 \leq |I| \leq k-4$ and of $H=\psi_r^{-1}(h)$ for $h$ a hyperplane in $\Pj^{n-3}$.  We refer to the free generating set $\langle E_I^r,H\rangle$ obtained via the map $\psi_r$ as the Kapranov basis in index $r$, index $r$ Kapranov basis, or $r$-th Kapranov basis. When the ``special" index is clear, we omit the superscript.

 The map $\pi_r$ is the forgetful morphism in index $r$: drop the $r^{\text{th}}$ marking on a stable rational curve, and stabilize if necessary. \par

The space $\Aff^1[n+1]$ is Fulton-Macpherson configuration space over $\Aff^1$, a partial compactification of the space parameterizing $n+1$ distinct marked points in $\mb{A}^{1}$. The map $\phi$ is an iterated blow-up of $\mb{A}^{n+1}$ along partial diagonals which defines $\mb{A}^{1}[n+1]$. This gives a basis of  $\Cl(\Aff^1[n+1])$ comprised of exceptional divisors $\Delta_I$ over partial diagonals $\Diag_I:=\{x_i = x_j \col i,j \in I \}$ for $3 \leq |I| \leq n+1$ \cite{FM}. A general element of an exceptional divisor $\Delta_I$ consists of a copy of $\Aff^1$ containing marked points in $\{1,\dots, n\} \sm I$, with a nodal tree of $\Pj^1$'s containing the marked points in $I$ attached. \par

As discussed in \cite[p. 195]{FM},  we have a map $p_{r}\colon \Aff^{1}[n+1] \ra \Aff^{1}[n]$ which drops the $r^\text{th}$ marking on an element of $\Aff^1[n+1]$ (analogous to the forgetful map $\pi_{r}\colon\M_{0,n+1} \ra \M_{0,n}$). Moreover, we have a map from $\Aff^1[n+1]$ to $\Pj^1[n+1]$: choose an embedding of $\Aff^1$ into $\Pj^1$ as an affine chart, and this induces a map taking an element of $\Aff^1[n+1]$ to a nodal tree of $\Pj^1$'s. Moreover, we have a map from $\Pj^1[n+1]$ into $\M_{0,n+1}$, mapping a tree of $\Pj^1$'s to its equivalence class modulo automorphisms. A slight obstruction arises because a tree of $\Pj^1$'s in $\Pj^{1}[n+1]$ may not be stable, but this is easily resolved by stabilization. Composition gives the map $\chi$ on the diagram \eqref{diag1}. Commutativity of the middle rectangle is evident from definitions. 

Our goal in this section is to relate divisor classes in the class group of the Fulton--MacPherson space to those in the class group of the moduli space of stable rational curves with marked points. To this end, we compute the class of the pull-backs of boundary divisors from $\M_{0,k}$ under $\chi$. Note that the only boundary divisors contained in $\bp(\delta_I)$ are $\Delta_I$ and $\Delta_{I^c}$. Hence we have that 
\begin{equation}\label{12} \bp(\delta_I) \sim m_1\Delta_I + m_2 \Delta_{I^c}. \end{equation} That $m_1=m_2=1$ is well known, and easy to prove by induction on $n$: %, but for lack of a reference we supply the proof.

\begin{Lemma}\label{m1m2} With maps and definitions as above, $\bp(\delta_I) \sim \Delta_I + \Delta_{I^c}$.
\end{Lemma}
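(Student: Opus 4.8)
The plan is to pin down the two multiplicities in \eqref{12}. Since we are given that the support of $\bp(\delta_I)$ is exactly $\Delta_I\cup\Delta_{I^c}$, it suffices to prove $m_1=m_2=1$, where $m_1=\operatorname{ord}_{\Delta_I}\bp(\delta_I)$ and $m_2=\operatorname{ord}_{\Delta_{I^c}}\bp(\delta_I)$. I would compute each multiplicity by exhibiting a rational function on $\M_{0,N}$ that is a local equation for $\delta_I$ near its generic point, and then reading off the order of vanishing of its $\beta$-pullback along the relevant exceptional divisor. The natural candidate is a cross-ratio: choosing $i,j\in I$ and $k,l\in I^c$ (possible since $|I|,|I^c|\geq 2$), set
\[
\lambda=\frac{(x_i-x_j)(x_k-x_l)}{(x_i-x_k)(x_j-x_l)}.
\]
Being $PGL_2$-invariant, $\lambda$ descends to a rational function on $\M_{0,N}$, and on the open locus of distinct points its pullback under $\beta$ is literally the displayed cross-ratio of the coordinates on $\Aff^1[N]$.

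Next I would record that $\lambda$ is a local equation for $\delta_I$ near its generic point $\eta$. Because distinct boundary divisors of $\M_{0,N}$ meet only in codimension two, no boundary divisor other than $\delta_I$ passes through $\eta$; hence near $\eta$ the function $\lambda$ has neither zeros nor poles away from $\delta_I$, so $\operatorname{div}(\lambda)=(\operatorname{ord}_{\delta_I}\lambda)\,\delta_I$ there, with $\operatorname{ord}_{\delta_I}\lambda\geq 1$ since $x_i\to x_j$ forces $\lambda\to 0$. Since $\beta$ maps the generic point of $\Delta_I$ dominantly onto $\eta$, and $\Delta_{I^c}$ avoids a neighborhood of the generic point of $\Delta_I$ (again a codimension-two meeting), pulling back the local equation gives
\[
\operatorname{ord}_{\Delta_I}(\beta^{*}\lambda)=(\operatorname{ord}_{\delta_I}\lambda)\cdot m_1.
\]

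The left-hand side I would then compute by hand in Fulton--MacPherson coordinates. Near the generic point of $\Delta_I$ the points of $I$ collide at a center $c$; writing $x_s=c+t\,w_s$ for $s\in I$, with $t$ a local equation for $\Delta_I$ and the $w_s$ distinct screen parameters (for $|I|=2$, $\Delta_I$ is the proper transform of $\{x_i=x_j\}$ and $t=x_i-x_j$), while the $x_u$ for $u\in I^c$ stay distinct and $\neq c$, one finds $x_i-x_j=t(w_i-w_j)$ of order $1$ in $t$, whereas $x_i-x_k$, $x_j-x_l$, $x_k-x_l$ are units. Thus $\operatorname{ord}_{\Delta_I}(\beta^{*}\lambda)=1$. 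As the two factors on the right above are positive integers whose product is $1$, this forces $m_1=1$ (and incidentally $\operatorname{ord}_{\delta_I}\lambda=1$). The multiplicity $m_2$ then follows by the identical computation with the roles of $I$ and $I^c$ interchanged: using $\delta_I=\delta_{I^c}$ and a cross-ratio built from two indices in $I^c$ and two in $I$, the collision of the $I^c$-points near the generic point of $\Delta_{I^c}$ yields the same order count, giving $m_2=1$.

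The step I expect to be the real content, rather than the local order computation, is the bookkeeping that converts $\operatorname{ord}_{\Delta_I}(\beta^{*}\lambda)$ into the multiplicity $m_1$: one must check that $\beta$ carries the generic point of $\Delta_I$ into the open locus of $\delta_I$ on which $\lambda$ is a genuine local equation, and that no competing boundary component of $\M_{0,N}$ or of $\Aff^1[N]$ interferes there. This transversality-of-generic-points bookkeeping is what validates the displayed product formula and lets the single order count isolate $m_1$; everything else is a direct substitution.
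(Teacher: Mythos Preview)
Your argument is correct and genuinely different from the paper's. The paper proves \ref{m1m2} by induction on the number of marked points: the inductive step uses the commutative square with the forgetful maps $p_{k+1}$ and $\pi_{k+1}$ together with the known pullback formulas $p_{k+1}^{-1}(\Delta^k_I)\sim\Delta^{k+1}_I+\Delta^{k+1}_{I\cup\{k+1\}}$ and $\pi_{k+1}^{-1}(\delta^k_I)\sim\delta^{k+1}_I+\delta^{k+1}_{I\cup\{k+1\}}$ to reduce to smaller $k$, and the base case $k=4$ is handled by pairing with an explicit test curve in $\Aff^1[4]$. Your route bypasses the induction entirely: you pick a cross-ratio $\lambda$ that is a local equation for $\delta_I$ at its generic point, pull it back, and read off $\operatorname{ord}_{\Delta_I}(\beta^{*}\lambda)=1$ directly from the standard Fulton--MacPherson screen coordinates $x_s=c+tw_s$. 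The factorization $1=(\operatorname{ord}_{\delta_I}\lambda)\cdot m_1$ in positive integers then forces $m_1=1$, and symmetry gives $m_2=1$.

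What each approach buys: the paper's induction uses only structural facts already on record in \cite{FM} and never opens up local charts on $\Aff^1[N]$; it is clean but somewhat indirect, and the $|I|=2$ or $|I^c|=2$ cases require a separate rewriting via the relation $\Delta_{\{\alpha,\beta\}}\sim-\sum\Delta_J$. Your approach is shorter and more conceptual once one is comfortable with Fulton--MacPherson local coordinates, and it handles all $I$ uniformly (your parenthetical about $|I|=2$ and $t=x_i-x_j$ covers that case with no extra work). The one place to be careful is exactly what you flagged: that $\beta$ carries the generic point of $\Delta_I$ to the generic point of $\delta_I$, so that the local equation $\lambda$ really governs the multiplicity there. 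This holds because $\beta|_{\Delta_I}\colon\Delta_I\to\delta_I$ is dominant; you assert this but might state it a bit more precisely than ``maps the generic point dominantly onto $\eta$''.
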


We now return to the set-up of \eqref{diag1}. Given a prime, non-boundary divisor $D \subset \ol{M}_{0,n}$, the divisor $\bp(D)$ is irreducible: $\chi$ has irreducible fibers over $M_{0,n}$ and $\bp(D) = \ol{\bp(D \cap M_{0,n})}$. Moreover, $\bp(D)$ is not an exceptional divisor of $\phi$ since $D$ is non-boundary. Hence $\bp(D)$ is precisely $\ap(\phi(\bp(D)))$, the proper transform of $\phi(\bp(D))$ with respect to $\phi$. \par 

The fact that $\phi(\bp(D))$ is irreducible follows from irreducibility of $\chi\I(D)$, so $\phi(\bp(D))=V(f)$ for some irreducible polynomial $f \in k[x_1, \dots ,x_{n}]$. In this case, we say that the divisor $D$ is {\bf specified by the polynomial $f$}. \par

 Using that $\Aff^1[n]$ is a blow-up of $\Aff^n$ along partial diagonals, we have that $\ap (V(f)) \sim  - \sum k_{J} \Delta_{J}$, where $k_J$ is the multiplicity of $f$ along the partial diagonal $\Diag_J := \{x_i=x_j \, | \, i,j \in J\}$ for $|J| \geq 3$. The next results relate these multiplicities, which are easily computed when $f$ is known, to the class of $D$ with respect to Kapranov bases.

\begin{Theorem}\label{polynomialclasses} Let $\pi_{n+1}\colon \M_{0,n+1} \ra \M_{0,n}$ denote the forgetful morphism in index $n+1$.  Given an irreducible polynomial $f \in k[x_1,\dots,x_n]$ specifying a divisor $D$ on $\ol{M}_{0,n}$ as described above, we have that $$\pi_{n+1}^{-1}(D) \sim  dH-\suml_{\substack{I \subset \{1, \dots, n\} \\ 1 \leq |I| \leq n-3}} m_I E_I^{n+1},$$  where $d=\deg(f)$ and $m_I$ is the multiplicity of f along the complementary partial diagonal $\Diag_{ I^c}$. \end{Theorem}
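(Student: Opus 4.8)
The plan is to pull everything back to the Fulton--MacPherson space and compare in the basis $\{\Delta_K\}_{3 \le |K| \le n+1}$ of $\Cl(\Aff^1[n+1])$. Taking $r = n+1$ in \eqref{diag1} and passing the central rectangle to class groups (arrows reversed) yields $\bp\big(\pi_{n+1}^{-1}(D)\big) \sim p_{n+1}^{-1}\big(\bp(D)\big)$. Since $H$ and the $E_I$ form the $(n+1)$st Kapranov basis of $\Cl(\M_{0,n+1})$, I would write $\pi_{n+1}^{-1}(D) \sim dH - \suml_I m_I E_I$ with $d$ and the $m_I$ unknown, and pin these down by computing $\bp$ of the left-hand side and $p_{n+1}^{-1}$ of the right-hand side explicitly.

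The right-hand side is immediate: from $\bp(D) = \ap(V(f)) \sim -\suml_{3 \le |J| \le n} k_J \Delta_J$, established before the theorem, together with Proposition~3.4 of \cite{FM} (as in the proof of Lemma \ref{m1m2}), I obtain $p_{n+1}^{-1}(\bp(D)) \sim -\suml_J k_J\big(\Delta_J + \Delta_{J \cup \{n+1\}}\big)$, already written in free generators since $|J| \ge 3$. On the left-hand side, Lemma \ref{m1m2} gives $\bp(E_I) = \bp(\delta_{I \cup \{n+1\}}) \sim \Delta_{I \cup \{n+1\}} + \Delta_{I^c}$ with $I^c = \{1,\dots,n\} \sm I$, so the one missing ingredient is $\bp(H)$.

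Computing $\bp(H)$ is the main obstacle. I would make the composite $\psi_{n+1} \circ \beta \colon \Aff^1[n+1] \to \Pj^{n-2}$ explicit: after using $PGL_2$ to send marking $n+1$ to infinity, the Kapranov morphism in index $n+1$ is given, up to a coordinate change on $\Pj^{n-2}$, by the linear system spanned by the degree-$(n-1)$ polynomials $(x_n - x_i)\prodl_{j \ne i,\, j \le n-1}(x_j - x_{n+1})$ for $i = 1,\dots,n-1$. Since $\Cl(\Aff^{n+1}) = 0$, the proper transform of a general member is $\bp(H) \sim -\suml_K \ell_K \Delta_K$, where $\ell_K$ is its multiplicity along $\{x_a = x_b \col a,b \in K\}$. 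A direct count of the vanishing factors should give $\ell_K = 0$ for every $K \subseteq \{1,\dots,n\}$ with $K \ne \{1,\dots,n\}$ and $\ell_{\{1,\dots,n\}} = 1$ (and, although it is not needed below, $\ell_K = |K|-2$ whenever $n+1 \in K$).

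With these pullbacks in hand I would match coefficients. Since $\pi_{n+1}^{-1}(D)$ already has a unique expression $dH - \suml_I m_I E_I$, it is enough to match coefficients on the diagonals $\Delta_K$ with $n+1 \notin K$, where there is no interference: $\bp(H)$ meets these only through $\Delta_{\{1,\dots,n\}}$, the summands $\Delta_{I \cup \{n+1\}}$ of $\bp(E_I)$ all contain $n+1$, and the two-element diagonals $\Delta_{\{i,n+1\}}$ produced when $|I| = 1$ expand, via the relation of \cite[p.~184]{FM}, only into diagonals containing $n+1$. Comparing the coefficient of $\Delta_{\{1,\dots,n\}}$ forces $d = k_{\{1,\dots,n\}}$, while comparing that of $\Delta_{I^c}$ forces $m_I = k_{I^c}$, the multiplicity of $f$ along $\{x_i = x_j \col i,j \notin I\}$. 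It remains to identify $k_{\{1,\dots,n\}}$ with $\deg f$: since $\bp(D)$ is a $\beta$-preimage it is invariant under the affine group $x \mapsto ax + b$ (the stabilizer of $\infty$ in $PGL_2$), hence so is $V(f)$, so $f$ is homogeneous in the differences $x_i - x_j$; its order of vanishing along the small diagonal $\{x_1 = \cdots = x_n\}$ is therefore its total degree, giving $d = \deg f$.
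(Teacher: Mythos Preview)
Your proof is correct, and the overall architecture---pull everything back to $\Aff^1[n+1]$, expand in the free basis $\{\Delta_K\}$, and match coefficients on those $\Delta_K$ with $n+1 \notin K$---is exactly what the paper does. You are in fact more explicit than the paper about the commutative-square step $\bp\circ\pi_{n+1}^{-1} = p_{n+1}^{-1}\circ\bp$, which the paper leaves implicit when it jumps from equation~\eqref{final} to ``Hence $m_I = k_{N-I}$''. The one genuine difference lies in the computation of $\bp(H)$: the paper invokes the Keel--Tevelev identity $H \sim \sum_{a,s \in F,\, n+1 \notin F}\delta_F$ from \cite[Prop.~3.4]{KT}, pulls back each $\delta_F$ via Lemma~\ref{m1m2}, and after a page of bookkeeping with the relation $\Delta_{\{\alpha,\beta\}}\sim -\sum_{\{\alpha,\beta\}\subsetneq I}\Delta_I$ arrives at $\bp(H) \sim -\Delta_N + \Omega$. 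You instead write the composite $\psi_{n+1}\circ\beta$ explicitly as $\big[(x_n-x_i)\prodl_{j\ne i,\,j\le n-1}(x_j - x_{n+1})\big]_{i=1}^{n-1}$---which is correct, being the Veronese description of the Kapranov map with the standard $n$ reference points---and read off the multiplicities of a general hyperplane section directly. Your route is shorter and more geometric for this particular pullback; the paper's route stays entirely within the boundary-class formalism and avoids identifying the linear system. Both yield the same key fact $\bp(H)\sim -\Delta_{\{1,\dots,n\}}+\Omega$, and from there the two arguments, including the affine-invariance observation giving $d=\deg f$, coincide.
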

\begin{Remark} We compute the class of $\pi_{n+1}^{-1}(D) \subset \M_{0,n+1}$ with respect to the $n+1$ Kapranov basis to preserve symmetry. Note that if $f \in k[x_1, \dots, x_n]$ specifies $D$, then the same polynomial viewed as an element of $k[x_1, \dots, x_{n+1}]$ specifies $\pi_{n+1}\I(D)$. In \ref{n+1ton} we explain how to convert the class of $\pi_{n+1}^{-1}(D) \subset \M_{0,n+1}$ to the class of $D \subset \M_{0,n}$ with respect to the index $r$ Kapranov basis.
\end{Remark}
\begin{proof}[Proof of \ref{polynomialclasses}] Define $N = \{1,\dots,n\}, \,\, N_2= \{1,\dots,n-2 \}$. Take $H$ as the pull-back of the linear span $\langle p_i \, | i \in N_2 \rangle$ under $\psi_{n+1}$. Throughout this proof, we let $E_I = E_I^{n+1}$. Using our free generators $\langle E_I, H \rangle$ for $\Cl(\M_{0,n+1})$ and $\langle \Delta_I \rangle$ for $\Aff^1[n+1]$, \ref{m1m2} implies that
\begin{equation}\label{basic1} \chi^{-1}(E_{I}) \sim \Delta_{I \cup \{n+1\}} + \Delta_{N - I}. \end{equation} By \cite[3.4]{KT},
$$ \chi^{-1}(H) \sim  \! \! \! \! \sum_{ \0 \neq J \subset N_{2}} \! \! \! \!  \chi^{-1}(\delta_{ J \cup \{n+1\} })\sim  \sum_{\0 \neq J \subset N_{2}} \! \! \! \! ( \Delta_{J \cup \{ n+1 \} } + \Delta_{N-J} ) $$
\begin{equation}\label{basic2}= \sum_{J \subset N_{2}, |J|>1} \! \! \! \! \!  \Delta_{J \cup \{ n+1 \} } + \! \!  \sum_{ J \subsetneq N_{2}, |J| \geq 1}\! \! \! \! \!  \Delta_{N-J} + \sum_{i \in N_{2}}  \!  \Delta_{\{i,n+1\}} + \Delta_{\{n-1,n\}} .\end{equation}

In \eqref{basic2}, the last terms are those involving divisor classes over partial diagonals of codimension $1$ in $\Aff^{n+1}$. These classes must be expressed in terms of our free generators. Using the relation \begin{equation}\label{ax} \Delta_{\{ \alpha,\beta \}} \sim -\! \! \! \suml_{ \substack{\\ \{ \alpha,\beta \} \subsetneq I }} \!   \Delta_{I}\end{equation} which follows from \cite[p.~184]{FM}, we see that

$$ \Delta_{\{a,n+1\}} \sim - \! \! \! \sum_{a \subsetneq J \subset N} \! \! \Delta_{J \cup \{ n+1 \} } $$ and

$$\Delta_{\{n-1,n\}} \! \sim  \,  - \! \! \! \! \! \sum_{ \{n-1,n \} \subsetneq I } \! \! \! \Delta_{I}.$$  Substituting these into  \eqref{basic2} yields

 \begin{equation} \beta^{-1}(H) \sim  \! \!  \! \! \! \sum_{J \subsetneq N_{2}, |J| \geq 1}\! \! \! \! \! \! \!  \Delta_{N-J} + \Delta_{n-1,n} + \Omega  \end{equation}
\begin{equation}\label{planeclass2}  \sim \! \! \! \sum_{ J \subsetneq N_{2}, |J| \geq 1} \! \! \! \! \! \!  \Delta_{N-J} \, \,  - \! \! \! \!  \sum_{ \{n-1,n \} \subsetneq I \subset N } \! \! \! \! \! \Delta_{I} \, + \,  \Omega,\end{equation} where $\Omega$ denotes a sum of free generators $\Delta_I$ with $n+1 \in I$. We subsequently redefine $\Omega$ to absorb such terms, which turn out to be superfluous. Summing over $N \sm J$ for $J \subsetneq N_{2}$ and $|J|\geq1$ is equivalent to summing over $I \subsetneq N $ with $\{n-1,n\} \subsetneq I$. Returning to \eqref{planeclass2} we obtain

$$ \beta^{-1}(H) = \sum_{\{n-1,n\} \subsetneq I \subsetneq N} \! \!  \! \! \! \! \! \! \!  \Delta_{I} -  \sum_{ \{n-1,n \} \subsetneq I \subset N}\! \! \! \! \! \!  \! \! \Delta_{I} \, + \, \Omega = -\Delta_{N} \,  + \,  \Omega.$$ 

We can now compute the class $\beta^{-1}(D)$:

$$\beta^{-1}(D) \sim \beta^{-1}( dH - \! \! \! \! \! \! \! \! \! \sum_{\substack{I \subset N\\ 1 \leq |I|\leq n-3}} \! \! \! \! \! \! \! \! \! m_{I}E_{I}) $$ 
$$\sim -d \, \Delta_{N} -\! \! \! \! \! \! \sum_{\substack{I \subset N\\ 1\leq|I|\leq n-3}}\! \! \! \! \! \! m_{I}  (\Delta_{I \cup \{n+1\}} + \Delta_{N - I}) \,\, + \,\, \Omega$$ 

\begin{equation} \label{final} \beta^{-1}(D) \sim  -d \, \Delta_{N}- \!\!  \sum_{\substack{I \subset N \\ 1<|I|<n-2} } \!\!\! \! m_I \Delta_{N - I}  \,  +  \, \Omega. \end{equation} 

 For $I \subset N$ satisfying $2\leq |I|\leq n-3$, we have a single term in \eqref{final}  involving the free generator $\Delta_{N-I}$, with coefficient $m_I$. Hence $m_I = k_{N-I}$. \par

It remains to determine the coefficient of $H$. The above analysis shows that we have a single summand $d \Delta_N$ in the class of $\beta^{-1}(H)$, and the proper transforms of boundaries contribute no multiples of $\Delta_N$ to the sum. Hence the multiplicity of $f$ along the diagonal $\Diag_N$ is $d$. We claim that if $D \subset \M_{0,n+1}$ is an irreducible non-boundary divisor and $f \in k[x_1, \dots,x_n]$ satisfies $V(f)=\phi(\bp(D))$, then $f$ is a homogeneous polynomial. Furthermore, for some $g \in k[x_1, \dots ,x_n]$ we have that $$f(x_1,\dots,x_n)=g(x_1-x_2,x_2-x_3,\dots,x_{n-1}-x_{n}).$$ This follows from the fact that $V(f) \cap \Aff^{n+1}\sm\{ \text{ diagonals } \}$ is stable under affine transformations, in particular rescaling and translation. \par

Consequently, substituting $x_i \mapsto (x_i+t)$ for $1 \leq i \leq n$ to compute the multiplicity along $\Diag_N$ leaves $f$ invariant. Since the polynomial is homogeneous we have that the multiplicity of $f$ along the partial diagonal $\Diag_N$ is  precisely the degree of $f$, as was to be shown.  \end{proof}

We now introduce notation to facilitate comparison of the class of $D \subset \M_{0,n}$ and that of $\pi_{n+1}^{-1}(D) \subset \M_{0,n+1}$. Let $d_I$ and $\delta_I$ denote the boundary divisors on $\M_{0,n}$ and $\M_{0,n+1}$, respectively.  For  $r \in \{1,\dots,n\}$ and $I \subset \{1, \dots,n\} \sm \{r\}$ with $1 \leq |I| \leq n-4$, let $e_I^r=d_{I \cup \{r\}}$ and let $h \subset \M_{0,n}$ denote the pull-back of a hyperplane under the Kapranov morphism in index $r$. Let $E_I^{n+1}=\delta_{I \cup \{n+1\}}$ and let $H$ be the pull-back of a hyperplane under $\psi_{n+1} \colon \M_{0,n+1} \ra \Pj^{n-2}$.

\begin{Proposition}\label{n+1ton} Let $D \subset \M_{0,n}$ be an irreducible divisor. Suppose that

$$ \pi_{n+1}\I(D) \sim aH-\suml_{1 \leq |I| \leq n-3}m_I E_I$$ on $\overline{M}_{0,n+1},$ where $\pi_{n+1} \colon \M_{0,n+1} \ra \M_{0,n}$ is the forgetful morphism in index $n+1$. Then 

$$ D \sim m_{\{r\}}h-\suml_{\substack{2 \leq |I| \leq n-3 \\ r \in I }} m_I e_{ I \sm \{r\}}^r$$
as a divisor on $\M_{0,n}$, with notation as in the paragraph preceding the result. 
\end{Proposition}
\begin{proof} For concreteness, assume $r=1$. The argument centers on computing classes of pull-backs of free generators $e_I$ and $h$ under $\pi_{n+1}$. The proposition is a straightforward calculation which appeals to three basic facts:
\begin{enumerate}
\item[i.] $ \pi_{n+1}^{-1} (d_J) \sim \delta_{J \cup \{n+1\}} + \delta_{J}. $
\item[ii.] $h \sim \suml_{\substack{a,s \in F \\1 \notin F}} d_F,$ for any $a,s$ distinct in $\{2, \dots, n\}$.
\item[iii.] $\delta_{\{i,j\}} \sim H - \suml_{\substack{i,j \notin F \\ 2 \leq |F| \leq n-3 }} E_F$ for $i <j \in \{1,\dots,n\}.$ 

\end{enumerate}

As previously discussed, (i) follows from noting that $\pi_{n+1}$ has reduced fibers; (ii) is proved in \cite[\S3.4]{KT}; (iii) is a reformulation of (ii) applied to divisors on $\M_{0,n+1}.$
Now consider $e_I = \delta_{I\cup \{1\}}$ for $I \subset \{2,\dots,n\}$ with $1 \leq |I| \leq n-4$. In the case that $2 \leq |I| \leq n-4$, we have

\begin{equation}\label{In=1}\pi_{n+1}^{-1}(d_{I \cup \{1\}}) \sim \delta_{I \cup \{1,n+1\}} + \delta_{I \cup \{1\}} \sim E_{I \cup \{1\}}^{n+1} + E_{\{2,\dots,n\} \sm I}^{n+1},\end{equation} appealing to (i). If $I = \{i\}$, then    

\begin{equation}\label{I=1} \pi_{n+1}^{-1}(d_{\{i,1\}}) \sim \delta_{ \{i,1,n+1\}} + \delta_{\{i,1\}} \sim  E_{\{1,i\}}^{n+1} + H - \suml_{\substack{1,i \notin J \\ 2 \leq |J| \leq n-3}}E_J^{n+1},\end{equation} using (iii) for the last equivalence. Last, we compute

$$ \pi_{n+1}^{-1}(h) \sim \suml_{\substack{ a,s \in F \\  1 \notin F }}\pi_{n+1}^{-1}(d_F).$$ Applying (i), we obtain 

\begin{equation}\label{intermediate1} \pi_{n+1}^{-1}(h) \sim \suml_{\substack{ a,s \in F \\  1 \notin F \\ 2 \leq |F| \leq n-3}} \delta_{F \cup \{n+1\}} + \suml_{\substack{ a,s \in F \\  1 \notin F \\ |F|=n-2}} \delta_{F\cup\{n+1\}} + \suml_{\substack{ a,s \in F \\  1,n+1 \notin F \\ 2 \leq |F| \leq n-2 }} \delta_F\end{equation}

\begin{equation}\label{intermediate2} \sim \!\!\!\!\! \suml_{\substack{ a,s \in F \\  1 \notin F \\ 2 \leq |F| \leq n-3}}\!\!\!\!\! \delta_{F \cup \{n+1\}} +\!\!\!\!\! \suml_{\substack{ a,s \in F \\  1 \notin F \\ |F|=n-2}}\!\! \delta_{F\cup\{n+1\}} +\!\! \suml_{\substack{ a,s \in F \\  n+1 \notin F \\ 2 \leq |F| \leq n-1 }}\!\! \delta_F -\!\! \suml_{\substack{a,s, 1 \in F\\ n+1 \notin F \\ 2 \leq |F| \leq n-1}}\!\! \delta_F -\delta_{ \{2,\dots,n\}}.\end{equation} Note that the term involving $\delta_{ \{2,\dots,n\}} = \delta_{ \{1,n+1\}}=E_{\{1\}}$ in \eqref{intermediate2} must be subtracted because the last term in \eqref{intermediate1} includes only $\delta_F$ for $|F|\leq n-2$. 

\par Using (ii), \eqref{intermediate2} can be rewritten as

$$ \suml_{\substack{ a,s \in F \\ |F| \ge 2\\ 1 \notin F}} E^{n+1}_{F} + \suml_{\substack{ i \notin \{a,s,1,n+1\}}} (H - \suml_{\substack{1,i \notin J \\ 2 \leq |J| \leq n-3}} E^{n+1}_J ) + H - \suml_{\substack{2\leq|F|\leq n-3 \\ a,s,1 \notin F}}E^{n+1}_F-E^{n+1}_{\{1\}}$$

\begin{equation}\label{planekey} = -E^{n+1}_{\{1\}}+ \Omega, \end{equation} where $\Omega$ absorbs terms proportional to $H$ or $E_J$ for $1 \notin J$. Using \eqref{I=1},\eqref{In=1}, and \eqref{planekey}, we see that 

$$ \pi_{n+1}^{-1}\big(bh-\sum_{1 \leq |I| \leq n-4\}} k_I e_I \big)  \sim -b E^{n+1}_{\{1\}} - \suml_{\substack{I \subset \{1,\dots n\} \\ 2 \leq |I| \leq n-4}} k_I E^{n+1}_{I \cup \{1\}} + \Omega,$$ where again terms in $\Omega$ are linearly independent of those explicitly written. Hence we have $b = m_{\{1\}}$ and $k_I = m_{I \cup \{1\}}$ as was to be shown. \end{proof}

\begin{Corollary} \label{polyn+1ton} If $D\subset \M_{0,n}$ is specified by  $f \in k[x_1,\dots,x_n]$ as in \ref{polynomialclasses}, the class of $D$ in the index $r$ Kapranov basis for $\Cl(\M_{0,n})$ is $$m_{\{r\}}H-\sum_{1 \leq |I| \leq n-3  } m_{I\cup \{r\}} E_I,$$ where $m_J$ is the multiplicity of $V(f)$ along the partial diagonal $\Diag_{\{1,\dots,n\} \sm J}$.\end{Corollary}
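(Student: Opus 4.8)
The plan is to derive Corollary~\ref{polyn+1ton} by feeding the output of Theorem~\ref{polynomialclasses} into Proposition~\ref{n+1ton}. Theorem~\ref{polynomialclasses} computes the class of $\pi_{n+1}^{-1}(D)$ inside $\M_{0,n+1}$ with respect to the $(n+1)^{\text{st}}$ Kapranov basis: it is $dH - \sum_{1 \leq |I| \leq n-3} m_I E_I$, where $d = \deg(f)$ equals the multiplicity $m_\emptyset$ of $f$ along the big diagonal, and where $m_I = k_{N-I}$ is the multiplicity of $V(f)$ along the partial diagonal $\{x_i = x_j \col i,j \notin I\}$. Proposition~\ref{n+1ton} then takes exactly a class of this shape, written as $aH - \sum m_I E_I$, and converts it into the class of $D$ in the $r^{\text{th}}$ Kapranov basis for $\Cl(\M_{0,n})$, namely $m_{\{r\}}h - \sum_{2 \leq |I| \leq n-3,\ r \in I} m_I e_{I \setminus \{r\}}$.

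First I would invoke Theorem~\ref{polynomialclasses} to record the class of $\pi_{n+1}^{-1}(D)$, noting that the pullback $D$ of Proposition~\ref{n+1ton} is precisely the $D$ specified by $f$ here, so the hypothesis of the Proposition is met with the coefficients $m_I$ identified as the relevant multiplicities of $V(f)$. Next I would apply Proposition~\ref{n+1ton} verbatim, reading off that the coefficient of $h$ is $m_{\{r\}}$ and the coefficient of $e_{I_0}$ (for $I_0 = I \setminus \{r\}$, $r \in I$, $2 \leq |I| \leq n-3$) is $-m_I$. The final step is a cosmetic reindexing: writing $J = I \setminus \{r\}$ so that $I = J \cup \{r\}$, the summation range $2 \leq |I| \leq n-3$ with $r \in I$ becomes $1 \leq |J| \leq n-4$ over subsets $J \subset \{1,\dots,n\} \setminus \{r\}$, and the coefficient $m_I$ becomes $m_{J \cup \{r\}}$, which after renaming $J$ to $I$ gives exactly the stated class $m_{\{r\}}H - \sum_{1 \leq |I| \leq n-3} m_{I \cup \{r\}} E_I$.

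The only genuine point requiring care is the bookkeeping of the multiplicity indices: in Theorem~\ref{polynomialclasses} the coefficient $m_I$ attached to $E_I$ is the multiplicity of $f$ along $\{x_i = x_j \col i,j \notin I\}$, so I must confirm that the index relabeling in Proposition~\ref{n+1ton} carries the multiplicity datum correctly through to the partial diagonal $\{x_i = x_j \col i,j \notin J\}$ in the final formula, where $J$ now ranges over subsets of $\{1,\dots,n\}$ of size up to $n-3$ and $m_J$ denotes that multiplicity. Because all three referenced results use the same convention for $m_J$ as a multiplicity along the complementary diagonal, this matching is automatic once the substitution $I \mapsto I \cup \{r\}$ is performed. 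I expect this index-matching to be the main (and essentially only) obstacle, and it is purely notational rather than mathematical.
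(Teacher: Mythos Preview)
Your proposal is correct and is exactly the approach the paper intends: the corollary carries no separate proof in the paper and is meant to be the immediate combination of Theorem~\ref{polynomialclasses} with Proposition~\ref{n+1ton}, together with the reindexing $I \mapsto I \cup \{r\}$ that you spell out. The only wrinkle is the summation bound: your own reindexing correctly yields $1 \leq |I| \leq n-4$ (as it must, since these are the Kapranov generators of $\Cl(\M_{0,n})$), whereas the corollary as printed writes $n-3$; this is a harmless typo in the statement rather than a defect in your argument.
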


The following Macaulay2 code uses the formulae derived above to give the class of a divisor specified by a polynomial equation with respect to the Kapranov basis in index $n+1$. It is important to note that result of this calculation is actually the divisor class modulo a large prime. For small $n$ and low-degree polynomials, this is unlikely to result in discrepancies with the actual class. Moreover, the code is best suited for experimentation and motivation; in this context, sufficient certainty about a given class can be obtained by varying the modulus.  \par

To implement the code, first import the code into Macaulay2. Then define a polynomial $f=f(x_1,\dots,x_n)$. The command $T(f)$ outputs the class encoded as a polynomial as follows: the class $H$ is represented by a variable $z$, and the classes $E_I$ are represented as a monomials $\prod_{i \in I} x_i$. \par

A brief explanation of the code: the first part creates an $n \times 2^n$ binary matrix $V$ encoding partial diagonals. The diagonal $\{x_i = x_j \col i,j \in I\}$ corresponds to the row with $1$'s in the rows corresponding to indices in $I$, and zeroes elsewhere. The associated matrix $W$ omits partial diagonals along which multiplicities need not be calculated. \par
Using this matrix $W$, the second part of the code defines functions (taking as input polynomials) which are composed to calculate the multiplicity along relevant diagonals. More explicitly, the code first performes a change of variables and then calculates the degree of the resulting polynomial, viewed as a polynomial in the variable $t$.  \par

{\small
\begin{verbatim} 
--before running code, choose n between 6 and 10.
n=6;
R = ZZ/21977[x_0...x_9,b_0...b_9,z,
Degrees=>{-1,-1,-1,-1,-1,-1,-1,-1,-1,-1,0,0,0,0,0,0,0,0,0,0,0}]; 
F = (i,j) -> if i==j then 1 else 0;
--code is for divisor in M_{0,n} specified by a polynomial in n variables. 
--output is class of pull-back in M_{0,n+1} with respect to Kapranov in index n+1
--need multiplicities of polynomial along partial diagonals.
--the following encodes diagonals in a matrix.
u= matrix table(1,2^n,(i,j)-> if j<2^(n-1) then 1 else 0); 
V = matrix table(n,2^n,(i,j)->u_(0,(2^i*j)%(2^n)) );
W = matrix table(n,2^n, (i,j)-> 
  if sum(for i from 0 to n-1 list F(1,V_(i,j)))==n or 
  sum(for i from 0 to n-1 list F(1,V_(i,j)))<3 then 0 else V_(i,j));
--next make substitutions along the diagonals
--given a polynomial f, (Y(f)) is a matrix with each column encoding 
--a diagonal in the first n entries and multiplicity in the n+1st. 
--BB(LL(Y(f))) encodes the class as a polynomial. 
--E_I = monomial that is product of x_i's for i in I. 
g = (i,j) -> if i<n and sum(for l from 0 to n-1 list W_(l,j)) != 0
	then ( F(0,W_j_i)*b_i + F(1,W_j_i)*(z) + x_i ) else 0;
h = (l,P) -> sub(P,{x_0=>g(0,l), x_1=>g(1,l), 
  x_2=>g(2,l), x_3=>g(3,l), x_4=>g(4,l), 
  x_5=>g(5,l), x_6=>g(6,l),x_7=>g(7,l),
  x_8=>g(8,l),x_9=>g(9,l)});
Y = P -> for i from 0 to 2^n-1 list matrix table(n+1,1,(j,l) -> 
  if h(i,P)==0 or first degree(h(i,P))==0 
  then 0 else if j==n then (first degree(h(i,P))) 
  else F(0,W_(j,i) )*(x_(j)) );
a = v-> if v_(n,0) == 0 then 0 else 
  product(flatten(entries((compress transpose v))));
LL = Y -> apply(Y,a);
BB= LL -> sum LL;
T = P-> BB(LL(Y(P)))-first degree(P)*z;
\end{verbatim} }

%----------------- HYPERTREES --------------------------------------------------------------------------------------------------------------------------------

\section{Equations of hypertree divisors}\label{ht}
The following definitions are from \cite{CT}. A \B{hypertree} on a set $N$ is a collection $\Gamma=\{\Gamma_1,\dots,\Gamma_d\}$ of subsets of $N$ satisfying:
\begin{enumerate}
\item For any $j \in \{1,\dots,d\}$, $|\Gamma_j| \ge 3.$
\item  Each $i\in N$ is contained in at least two distinct $\Gamma_j$'s. 
\item Convexity: $ \bigr| \bigcup\limits_{j\in S} \Gamma_j \bigl| -2 \ge\suml_{j\in S}(|\Gamma_j|-2)$ for any $S\subset \{1,\dots,d \}.$
\item  Normalization: $ |N|-2=\suml_{1\leq j \leq d}(|\Gamma_j|-2).$\\
\end{enumerate} 
$\Gamma$ is \B {irreducible} if the convexity condition (3) is strict for $1<|S|<d$. A \B{ planar realization} of a hypertree $\Gamma = \{ \Gamma_1, \dots, \Gamma_d\}$ is a collection of points $p_1,\dots,p_n \in \mb{P}^2$ satisfying that $p_i,p_j,p_k$ are collinear if and only if there exists an  $\alpha \in \{1,\dots,d\}$ such that $i,j,k \in \Gamma_{\alpha}$.

\begin{figure}[h]\label{planarreal}
\center
\includegraphics[scale=.35]{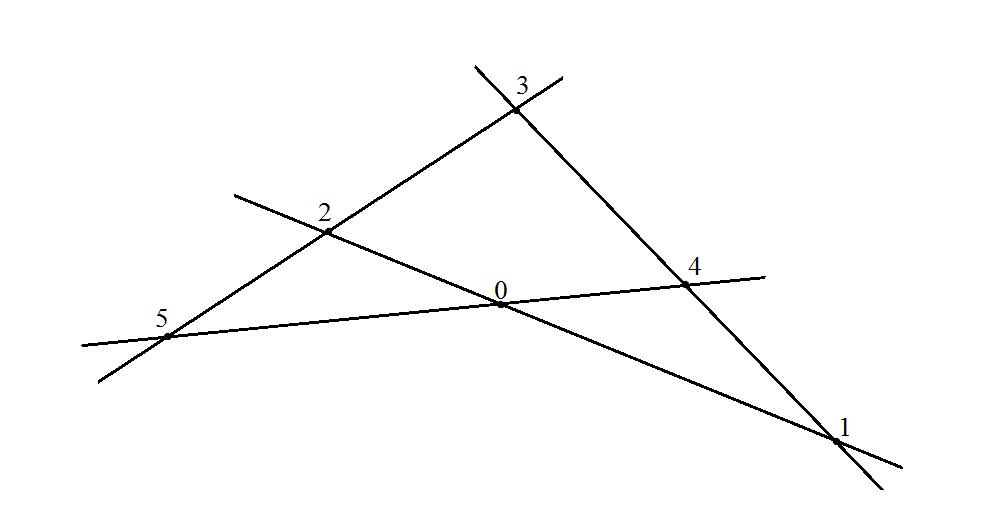} 
\caption{Planar realization of the complete quadrilateral, defined by $\Gamma = \{012,314,045,325\}$.}
\end{figure}

Given planar realization, the images of $p_1, \dots, p_n$ under projection from a general point give $n$ distinct marked points on $\Pj^1$. Given $\Gamma$, define the \B{hypertree divisor} $D_{\Gamma} \subset \ol{M}_{0,n}$ as the closure of the locus 
 $$\big\{ [\mb{P}^1;q_1,\dots,q_n] \col \E \text{ a realization $\{p_i\}$ and projection } \pi \text{ with } q_i = \pi (p_i) \big\}.$$ For $\Gamma$ irreducible, Castravet and Tevelev show that $D_{\Gamma}$ is a nonempty irreducible divisor generating an extremal ray of $\Eff(\M_{0,n})$. \par 
 
 Rather than defining $D_\Gamma$ as above, one might consider the closure of the locus of equivalence classes $[\mb{P}^1;q_1,\dots,q_n]$ such that $q_i$'s are projections of points $\{p_1,\dots,p_n\} \subset \mb{P}^2$ where  $p_i, p_j, p_k$ are collinear if $i,j,k \subset \Gamma_\alpha$, and not all $p_i$ are collinear. The distinction here is that we no longer require ``only if". It is nontrivial that this weaker definition coincides with that of $D_\Gamma$, and is proved in \cite[\S4]{CT}. We will use this characterization to obtain equations in $k[x_1,\dots,x_n]$ specifying irreducible hypertree divisors (where this specification is in the precise sense discussed in \ref{basics}). Our proof is a direct generalization of results in [CT] for the case where all subsets comprising the hypertree have three elements. \par
We first set up some notation. Given a subset $\Gamma_i = \{ a_{i1}, \dots,a_{ik_i} \}$, let $\Gamma_{ij} = \{ a_{i1}, a_{i2},a_{ij} \}$ for $3 \leq j \leq k_i-2$. By normalization $$\sum_{i=1}^{d}|\Gamma_i|=\sum_{i=1}^{d}(k_i -2) = n-2,$$ and from each $\Gamma_i$ we define precisely $k_i-2$ sets $\Gamma_{ij}$, so the total number of subsets $ \Gamma_{ij}$ for $1 \leq i \leq d, 3 \leq j \leq k_i,$ is $n-2$. Let $\{ G_\alpha \}_{1 \leq \alpha \leq n-2}$ to be an ordering of the collection of $\Gamma_{ij}$'s. With this, we can state the following

\begin{Theorem}\label{htpoly} Let $\Gamma = \{ \Gamma_1, \dots , \Gamma_d \}$ be a hypertree. With notation as preceding the theorem, define an $(n-2) \times n$ matrix  $\bf{A}$ by
$$ G_\alpha = \{ i,j,k \} \Lra \bd{A}_{\alpha,i} = (x_j-x_k), \, \bd{A}_{\alpha,j}=(x_k-x_i), \, \bd{A}_{\alpha,k}=(x_i-x_j).$$ If $\beta \notin G_\alpha,$ then let $\bd{A}_{\alpha.\beta}=0$. 
Define $\bf{B}$ as the $(n-3) \times (n-3)$ matrix obtained from $\bf{A}$ by deleting a row and all columns in which the entries of that row are nonzero. The hypertree divisor $D_\Gamma$ is specified by 
$$\frac{\det \bf{B}}{\prod_{i=1}^{d}(x_{a_{i1}}-x_{a_{i2}})^{k_i-3}}.$$
\end{Theorem}

\begin{proof} The condition that points  $x_1,\dots x_n \in \mathbb{A}^1$ can be obtained from the projection of a hypertree curve is equivalent to existence of $y_1,\dots,y_n \in \mb A^n$ so that, defining $p_i = (x_i,y_i)$, the following is satisfied:  \begin{equation}\label{***} \text{ Not all $p_k$ are collinear, and } i,j,w \in \Gamma_ k \Lra p_i,p_j,p_w \text{ collinear}. \end{equation}

\par By construction, $p_i, p_j,p_w$ are collinear whenever $i,j,w \in \Gamma_k$ for some $k$ if and only if $p_x, p_y,p_z$ are collinear  whenever $x,y,z \in G_i$ for some $i$. We apply the argument given in \cite[\S8]{CT} to the subsets $G_i$ to obtain $\bf{A}$ as defined above so that a solution to $\bd{A}(y_1,\dots y_n)^{T} = 0$ with not all points $p_i$ collinear implies that $[\mb{P}^1; a_1,\dots,a_n] \in D_\Gamma$, if $[\Pj^1:a_1,\dots,a_n] = \chi(x_1,\dots, x_n)$, where $\chi\: \Aff^1[n] \lra \M_{0,n}$ is as defined in \ref{intro}. \par

If a solution $y = (y_1,\dots,y_n)^{T}$ to $\bd{A}y = 0$ exists, we may choose coordinates so that three points corresponding to the indices in some fixed $G_{i_0}$ lie along $y=0$. We shall subsequently refer to this $G_{i_0}$ as a {\bf pivot subset}. Requiring that not all $p_i$'s are collinear and setting $y_i=0$ for $i \in G_{i_0}$, we seek a nontrivial solution $\bd{B} y=0$, where $\bd{B}$ is as defined in the theorem. 

For points $x_1,\dots,x_n$ there exists a configuration of points $p_1,\dots,p_n$ satisfying \eqref{***} if and only if $\det {\bf B} (x_1,\dots,x_n) =0$.  Let $A$ denote $\mb{A}^n$ minus partial diagonals of codimension greater than 1; what we have shown is that $\phi_{*}^{-1}(V(\det {\bf B}) \cap A)=\chi^{-1}(D_\Gamma \cap M_{0,n})$, with maps $\phi$ and $\chi$ as defined in \ref{basics}. Hence $\det \bd{B}$ is the correct equation for $D_\Gamma$ on $M_{0,n}$, but $\det \bd{B}$ may include erroneous boundary factors corresponding to partial diagonals.

\begin{Claim} For each $\Gamma_i = \{ a_{i,1},a_{i,2}, \dots, a_{i,k_\alpha} \}$, $\max \{m : (x_{a_{i,1}}-x_{a_{i,2}})^m| \det {\bf B} \} = k_\alpha-3$. \end{Claim} 

 Given the claim, $D_\Gamma$ is specified  by \begin{equation} \label{htpolynomial} g(x_1,\dots,x_n):=\frac{\det {\bf B}}{\prodl_{i=1}^{d} (x_{a_{i,1}}-x_{a_{i,2}})^{k_\alpha-3}}.\end{equation}  To see that $\ap(V(g))=\bp(D_\Gamma)$, note that $\deg(g)=n-2-\sum_{i=1}^{d}(k_i-2)=d-1$, where the last equality invokes normalization of $\Gamma$. By \cite[\S4.2]{CT}, we know that $\pi_{n+1}\I(D_\Gamma) \sim (d-1)H + \dots \, \, $. From \eqref{polynomialclasses}, a divisor $D$ specified by a polynomial $F$ satisfies $\pi_{n+1}\I(D) \sim aH + \dots \, $ where $a= \deg(F)$. Hence degree considerations show $\phi_{*}^{-1}(V(g))= \chi^{-1}(D_\Gamma)$. 

\par We now prove the claim. Consider the rows of $\bd{B}$ corresponding to a given subset $\Gamma_i=\{a_{i1},\dots,a_{ik_i}\}$. Assume for simplicity $i = 1$ and $a_{i1},a_{i2},\dots,a_{ik_i}=1,2,\dots,k_1$; the argument generalizes. The first $k_1-2$ rows of $\mathbf{A}$ are as follows:
$$ \begin{pmatrix}
  x_2-x_3 & x_3-x_1 & x_1-x_2 & 0 & 0 & \cdots & 0 & \cdots \\
  x_2-x_4 & x_4-x_1 & 0&  x_1-x_2 & 0 & \cdots & 0 & \cdots \\
  x_2-x_5 & x_5-x_1 & 0 &  0 & x_1-x_2 & \cdots & 0 & \cdots \\
\vdots & \vdots & \vdots & \vdots & \vdots & \ddots & \vdots & \ddots \\ 
 x_2-x_{k_i} & x_{k_i}-x_1 & 0&  0 & 0 & \cdots & x_1-x_2 & \cdots \\
 \end{pmatrix} $$

\par In passing from the matrix $\bd{A}$ to $\bd{B}$, the rows of $\bd{A}$ shown above can be altered in three ways. Let $G_{i_0}$ be the pivot subset used to obtain $\bd{B}$.

\begin{enumerate}
\item $ G_{i_0} \subset \Gamma_1$. Without loss of generality we may assume $G_{i_0}$ corresponds to the first row of $\bd{A}$. The first $k_1-3$ rows of $\bd{B}$ then appear as follows:
$$ \begin{pmatrix}
  x_1-x_2 & 0 & \cdots & 0 & \cdots \\
  0 & x_1-x_2 & \cdots & 0 & \cdots \\
 \vdots & \vdots & \vdots & \vdots & \ddots  \\ 
  0 & 0 & \cdots & x_1-x_2 & \cdots \\
 \end{pmatrix} $$
Evidently $(x_1-x_2)^{k_1-3}$ divides $g$.

\item $ |G_{i_0} \cap \Gamma_1| = 0 $. In this case, the first $k_1-2$ rows of $\bd{B}$ will be identical to those of $\bd{A}$ given above. Adding column $2$ to column $1$ gives 
$$ \begin{pmatrix}
  x_2-x_1 & x_3-x_1 & x_1-x_2 & 0 & 0 & \cdots & 0 & \cdots \\
  x_2-x_1 & x_4-x_1 & 0&  x_1-x_2 & 0 & \cdots & 0 & \cdots \\
  x_2-x_1 & x_5-x_1 & 0 &  0 & x_1-x_2 & \cdots & 0 & \cdots \\
\vdots & \vdots & \vdots & \vdots & \vdots & \ddots & \vdots & \ddots \\ 
 x_2-x_1 & x_{k_i}-x_1 & 0&  0 & 0 & \cdots & x_1-x_2 & \cdots \\
 \end{pmatrix} $$
Expansion  across rows shows $(x_1-x_2)^{k_1-3}$ divides the $\det(\bf{B})$.

\item $ |G_{i_0} \cap \Gamma_1|=1$. This is the situation where precisely one column and no rows of the submatrix of $\bd{A}$ corresponding to $\Gamma_1$ are removed in passing to $\bd{B}$. Let $\{ h \} = G_{i_0}\cap \Gamma_1$. We have two subcases to consider: 
\begin{itemize} 
\item $3 \leq h$. This results in a submatrix of the first $k_1-2$ rows of $\bd{B}$ of the form 

$$ \begin{pmatrix}
  x_2-x_3 & x_3-x_1 & 0 & 0 & \cdots & 0 & \cdots \\
  x_2-x_4 & x_4-x_1 &  x_1-x_2 & 0 & \cdots & 0 & \cdots \\
  x_2-x_5 & x_5-x_1 &  0 & x_1-x_2 & \cdots & 0 & \cdots \\
\vdots & \vdots & \vdots & \vdots & \ddots & \vdots & \ddots \\ 
 x_2-x_{k_i} & x_{k_i}-x_1 &  0 & 0 & \cdots & x_1-x_2 & \cdots \\
 \end{pmatrix} $$
The argument from case 2 goes through (with minor adjustments) to show $(x_1-x_2)^{k_1-3}$ is a factor of $\det \bf{B}$.

\item $h =1$ or $h=2$.
\par This results in the first $k_1-2$ rows of $\bd{B}$ of the form 
$$ \begin{pmatrix}
  x_3-x_1 & x_1-x_2 & 0 & 0 & \cdots & 0 & \cdots \\
  x_4-x_1 & 0&  x_1-x_2 & 0 & \cdots & 0 & \cdots \\
   x_5-x_1 & 0 &  0 & x_1-x_2 & \cdots & 0 & \cdots \\
 \vdots & \vdots & \vdots & \vdots & \ddots & \vdots & \ddots \\ 
 x_{k_i}-x_1 & 0&  0 & 0 & \cdots & x_1-x_2 & \cdots \\
 \end{pmatrix} $$
 Evidently $(x_1-x_2)^{k_1-3} $ divides $\det \bd{B}$.
\end{itemize}

\end{enumerate}

This proves the claim.  \end{proof}

All hypertrees up to permutation for at most $11$ vertices were found in \cite{Sch}.  Enumeration of small irreducible hypertrees is as follows: 1 for 6 or 7 vertices; 3 for 8 vertices; 11 for 9 vertices; and 96 for 10 vertices. \par

Using our Macaulay2 program for computing classes specified by polynomial equations (see Section \ref{general}) and the polynomial \eqref{htpolynomial}, we  computed all divisor classes corresponding to irreducible hypertrees for $ 6\leq n \leq10$. We additionally wrote a program to compute symmetry group sizes, and computed symmetry groups of irreducible hypertree classes for $6 \leq n \leq 8$. \par 

Particularly nice hypertrees are obtained via even triangulations of a two-sphere: given a bi-colored (say black and white) triangulation of the two-sphere with $n$ vertices, one can consider unordered triplets $\{i,j,k\}$ corresponding to the vertices of black triangles. The collection of all such triplets gives a set of subsets of $\{1, \dots, n\}$; Castravet and Tevelev show that, for any bicolored triangulation, this collection of subsets yields a hypertree. They call hypertrees obtained in this way {\bf spherical hypertrees}. These spherical hypertrees are irreducible unless the triangulation is a connected sum \cite[1.6]{CT}. For $6 \leq n \leq 10$, we classify spherical hypertrees in our database. Spherical hypertrees are further discussed in \ref{sphere}: certain spherical hypertree divisors are seen arise as certain Chen--Coskun divisors. \par

For the complete database and Macaulay2 code specific to hypertree divisors, see  \cite{Op}. It is hoped that these data will prove useful in further investigations of hypertrees and other divisors. In addition to the production of the database, the code from the previous section was applied to explore properties of  divisors, motivating the discovery of our counterexamples to \ref{conj}. We provide the counterexample in \ref{counterex}, but must first describe the family of divisors used in the construction.

%----------------------------------- CC DIVISORS  -------------------------------------------------------------------------------------

\section{Chen--Coskun divisors}\label{CCdivs}

In \cite{CC}, Chen and Coskun define divisors on the moduli space $\M_{1,n}$ of genus $1$ curves with $n$ ordered markings as follows. Given an $n$-tuple of integers $$\z=(a_1, \dots, a_n)$$ with $\sum_i a_i=0$, define $D_\z \subset \M_{1,n}$ to be the closure of the locus of smooth genus $1$ curves $[E; p_1, \dots, p_n]$ so that $\sum a_i p_i = 0$ in the Jacobian of the curve. Results on these divisors include that, for $\gcd(a_1, \dots, a_n) =1$ and $n \geq 3$, the divisor $D_\z$ is an irreducible, rigid effective divisor generating an extremal ray of the effective cone of $\M_{1,n}$. Moreover, there are infinitely many of distinct divisors of this form  on $\M_{1,n}$ for each $n \geq 4$, showing that $\Eff(\M_{1,n})$ is not finitely generated \cite{CC}. \par

 The natural clutching morphism $\vphi\colon \ol{M}_{0,n+2} \lra \ol{M}_{1,n}$ identifies marked points $p_{n+1}$ and $p_{n+2}$ on a rational curve in $\ol{M}_{0,n+2}$:\begin{figure}[h]
\center
\includegraphics[scale=.55]{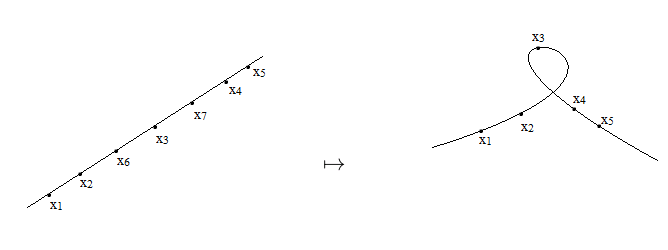}
\caption{The clutching morphism $\vphi \colon \M_{0,7} \ra \M_{1,5}.$}

\end{figure}\newline One might ask what can be said about the proper transforms under $\vphi$ of the divisors defined in \cite{CC}. However, the definition given by Chen and Coskun does not lend itself to study of these proper transforms: the image $\vphi(\M_{0,n+2})$ lies entirely in the complement of the smooth locus, and the definition above is in terms of the closure of a collection of smooth curves. Hence, we give an alternate definition entirely within the locus of nodal genus $1$ curves. \par  For ${\bf a}=(a_1,\dots,a_n) \in \mb{Z}^n$ satisfying $\sum_{i=1}^{n}a_i = 0$ and $\gcd(a_1,\dots,a_n)=1$, define $D_{\z}$ as the closure in $\vphi (\M_{0,n+2})$ of the locus of irreducible nodal curves $ [C; p_1,\dots,p_n] $ with $n$ distinct smooth markings such that $\sum_{i=1}^{n} a_i p_i = 0$ in $\Pic^0(C) \simeq \mb{G}_m$.
It is clear that our subset $D_\z \subset \vphi(\M_{0,n+2})$ is the intersection of the divisor $D_{\z}$ defined in \cite{CC} with $\vphi(\M_{0,n+2})$, but we will not use this fact.  Henceforth, $D_\z$ will refer to our divisor defined on $\vphi(\M_{0,n+2})$ unless otherwise noted.

\begin{Lemma}\label{irredG} The locus $D_\z \subset \vphi(\M_{0,n+2})$ is an irreducible divisor. \end{Lemma}

\begin{proof} Let $Y=\mb{G}_m^n \sm \{ \text{ diagonals } \}$. Consider the following commutative diagram:

$$\begin{CD}
Y @>(\lambda_1, \dots, \lambda_i) \mapsto \prod_{i=1}^{n} \lambda_i^{a_i}>> \mb{G}_m \\
@V \gamma VV @VVwV \\
\vphi(M_{0,n+2}) @>[C\colon p_1, \dots, p_n] \mapsto \sum_{i=1}^n a_i p_i>> \Pic^0(C) 
\end{CD}.$$ \newline  Above, $\gamma$ is induced by an isomorphism of the smooth locus of an irreducible nodal cubic with $\mb{G}_m$, and maps an $n$-tuple of distinct points to their isomorphism class in $\vphi(\M_{0,n+2})\subset \M_{1,n}$.  The map $w$ is the canonical identification of $\Pic^0(C)$ with $\mb{G}_m$. Note that $\gamma$ is surjective onto $\vphi(M_{0,n+2})$. Define $$S = \big\{ (p_1, \dots , p_n) \in Y \col \prod_{i=1}^{n} p_i^{a_i} = 1 \big\},$$ and $\gamma^{-1}(D_\z \cap \vphi(M_{0,n+2}))=S$. Hence it will suffice to show irreducibility of $S$. 

Recall that endomorphisms of $\mb{G}_m^{n}$ are given by $$p_i \mapsto \prod_{j=1}^{n} p_j^{r_{ij}}$$ for $r_{ij} \in \mathbb Z$; so we represent an endomorphism via an integral matrix acting on exponents:

$$ R=\begin{pmatrix}
 r_{11} & r_{12} & \dots & r_{1n} \\
 r_{21} & r_{22} & \dots & r_{2n} \\
 r_{31} & r_{32} & \dots & r_{3n} \\
\vdots & \vdots & \vdots & \vdots & \\
  r_{n1} & r_{n2} & \dots & r_{nn} \\
 \end{pmatrix}. $$ 
The corresponding map is an automorphism if and only if $|\! \det R| = 1$. \par 

Suppose that there is an automorphism $h$ such that $$p_1 \xmapsto{h} \prod_{i=1}^{n}p_i^{a_i}.$$ Then $$\{(p_1,\dots,p_n) \col  p_1^{a_1}\dots p_n^{a_n}=1\} \simeq \{(q_1, \dots , q_n) \col q_1=1\},$$ where the isomorphism is induced by the given endomorphism. This is the graph of a morphism from $\mb{G}_m^{n-1}$ to $\mb{G}_m$, hence an irreducible divisor. So it suffices to show that there exists an automorphism with matrix $R$ such that $r_{i1}=a_i$ for $1 \leq i \leq n$. We show this by induction on $n$. For $n=2$, the condition that $\gcd(a_1,a_2)=1$ gives that there exist $c_1$, $c_2$ such that $a_1c_1-a_2c_2=1$. A matrix with the desired property is then given by

$$ R=\begin{pmatrix}
 a_{1} & c_2 \\
a_2 & c_1 \\
\end{pmatrix} .$$\\
Now consider the case for $S \subset \mb{G}_m^{k+1}$ with $\gcd(a_1,\dots, a_{k+1})=1$. Let $s:=\gcd(a_1, \dots,a_k)$. Factoring out the  $\gcd$, inductively there is an automorphism of $\theta$ of $\mb{G}_m^k$ taking $p_1^{a_1} \dots p_k^{a_k} \xmapsto{\theta} q_1^s$, where $q_i := \theta(p_i)$. The map $\theta$ extends to an automorphism of $\mb{G}_m^{k+1}$ with $p_{k+1} \xmapsto{\theta} p_{k+1}$, and we have 
 $$S \simeq \{(q_1, \dots, q_{k+1}) \col q_1^{s}q_{k+1}^{a_{k+1}}=1\}.$$ 

The assumption that $\gcd(a_1,\dots,a_{k+1})=1$ forces $\gcd(s,a_{k+1})=1$. Hence the induction is completed by applying the $k=2$ case to obtain an appropriate automorphism of $\langle q_1, q_{k+1} \rangle \simeq \mb{G}_m^{2}$.\end{proof}

We now give explicit formulas for the class of the proper transform of $D_{\z}$ with respect to the clutching morphism $\vphi$.

\begin{Theorem}\label{classeqs}   Given $\z=(a_1,\dots,a_n)$ with $\sum_i a_i=0$, the proper transform $\Lambda_\z$ of $D_\z$ under the  map $\vphi\colon \M_{0,n+2}\ra \M_{1,n}$ identifying marked points $n+1$ and $n+2$ is an is an irreducible divisor. Furthermore, $\Lambda_\z$ is specified in the sense of \ref{basics} by the polynomial

\begin{equation}\label{polypoly} \frac{ \prodl_{a_i \geq 0} \!\!(x_{n+1}\! -\! x_i)^{|a_i|} \!\! \prodl_{a_i \leq 0}\!\!(x_{n+2}\!-\!x_i)^{|a_i|}\!\! -\!\! \prodl_{a_i \leq 0}\!\! (x_{n+1} \!-\! x_i)^{|a_i|}\!\! \prodl_{a_i \geq 0}\!\!(x_{n+2}\!-\!x_i)^{|a_i|}  }{x_{n+1}-x_{n+2} },\end{equation} and

$$\pi_{n+3}\I(\Lambda_\z) \sim dH- \sum m_I E_I^{n+3},$$ with coefficients as follows:

\begin{itemize}
\item If $n+1, n+2 \notin I$ and $\{ i \col a_i \neq 0\} \subset I$, then $m_I =0$.
\item If $n+1,n+2 \notin I$ and $\{i \col a_i \neq 0 \} \not\subset I$, then $m_I= (\sum_{i \notin I} |a_i|)-1.$
\item If $n+1, n+2 \in I$ and $\{ i \col a_i \neq 0 \} \subset \{1, \dots, n \} \sm I $, then $m_I =1$. 
\item If $n+1, n+2 \in I$ and $\{ i \col a_i \neq 0 \} \not\subset \{1, \dots, n \} \sm I $ then $m_I =0$. 
\item If $|\{n+1, n+2\} \cap I| =1$, then $m_I= \min \{ \sum_{0 \leq a_i \notin I}|a_i|, \sum_{0 \geq a_i \notin I}|a_i|\}. $
\item $d = (\sum_i |a_i|)-1.$
\end{itemize} \end{Theorem}The above theorem immediately yields a number of useful formulae, which we record prior to proving the theorem.
\begin{Corollary}\label{an0} If $\z = (a_1, \dots, a_n)$ with $a_i \neq 0$ for all $i$, then $\pi_{n+3}\I(\Lambda_\z) \sim dH - \sum_i m_I E_I^{n+3},$ with coefficients as follows: \begin{itemize}
\item If $n+1, n+2 \notin I$, then $m_I= (\sum_{i \notin I} |a_i|)-1.$
\item If $n+1, n+2 \in I$ then $m_I=0$ except when $I = \{n+1,n+2\}$, in which case $ m_{I}= 1$.
\item If $|\{n+1, n+2\} \cap I| =1$, then $m_I= \min \{ \sum_{0 \leq a_i \notin I}|a_i|, \sum_{0 \geq a_i \notin I}|a_i|\}. $
\item $d = (\sum_i |a_i|)-1.$
\end{itemize}
\end{Corollary}
\begin{proof} A special case of \ref{classeqs}. \end{proof}

\begin{Corollary}\label{classeqsred} Given $\z=(a_1, \dots, a_n)$ with $\sum_i a_i=0$, the class of $\Lambda_\z$ with respect to the Kapranov basis in index $r$ is
$$ ( \suml_{\substack{1 \leq i \leq n \\ i \neq r}} |a_i|-1 )H - \suml_{\substack{n+1,n+2 \notin I \\ \{i \col a_i \neq 0\} \not\subset I}}\bigg (\suml_{i \notin I \cup \{r\} } |a_i| -1\bigg)E_I$$ \begin{equation}\label{nastynasty5} -\suml_{| \{n+1,n+2 \} \cap I| =1}\min \Big{\{} \suml_{\substack{0 \leq a_i \\  i \notin I \cup \{r\} }}|a_i|, \suml_{\substack{0 \geq a_i \\ i \notin I \cup \{r\} }} |a_i|\Big{\}}  E_I - \sum_{\substack{ n+1,n+2 \in I \\ I\cap N \subset \{ i \col a_i =0\} \\ r \notin I}} E_I. \end{equation} 
\end{Corollary}

\begin{proof} Apply \ref{n+1ton} to \ref{classeqs}. Note that the last terms in \eqref{nastynasty5}, i.e. those involving $E_I$ for $\{n+1,n+2\} \in I$, vanish if $a_r \neq 0$. \end{proof}

\begin{Corollary}\label{nastynasty} Suppose that $\z=(a_1, \dots, a_n)$ with $a_i \neq 0$ for all $i$. Then the class of $\Lambda_{(a_1,\dots,a_n)} \subset \M_{0,n+2}$ with respect to the Kapranov basis in index $r$ for $r \in \{1, \dots n\}$ is

$$ ( \suml_{\substack{1 \leq i \leq n \\ i \neq r}} |a_i|-1 )H - \suml_{\substack{n+1,n+2 \notin I}} (\suml_{i \notin I \cup \{r\} } |a_i| -1)E_I$$ \begin{equation}\label{nastynasty1} -\suml_{| \{n+1,n+2 \} \cap I| =1}\min \Big{\{} \suml_{\substack{0 \leq a_i \\  i \notin I \cup \{r\} }}|a_i|, \suml_{\substack{0 \geq a_i \\ i \notin I \cup \{r\} }} |a_i|\Big{\}}  E_I. \end{equation} 
\end{Corollary}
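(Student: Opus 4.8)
The plan is to obtain this corollary as a direct application of Corollary \ref{polyn+1ton} (equivalently Proposition \ref{n+1ton}), which converts the class of a forgetful pullback $\pi_{n+1}^{-1}(D)\subset\M_{0,n+1}$ into the class of $D\subset\M_{0,n}$ in the $r^{\text{th}}$ Kapranov basis. Here the divisor of interest is $\Lambda_\z\subset\M_{0,n+2}$, so I would run that machinery with $n$ replaced by $n+2$: the pullback $\pi_{n+3}^{-1}(\Lambda_\z)\subset\M_{0,n+3}$ has class computed in \ref{classeqs}, and the corollary then reads off the class of $\Lambda_\z$ itself. The hypotheses are satisfied, since \ref{classeqs} establishes that $\Lambda_\z$ is irreducible and specified by the polynomial \eqref{polypoly}, exactly as \ref{polyn+1ton} requires.

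Concretely, Corollary \ref{polyn+1ton} with $n\mapsto n+2$ gives
$$[\Lambda_\z]=m_{\{r\}}H-\suml_{1\leq|I|\leq n-1}m_{I\cup\{r\}}E_I,$$
where each $m_J$ is the multiplicity of $V(\eqref{polypoly})$ along the diagonal indexed by $J$, as supplied by \ref{classeqs}. The remaining work is bookkeeping: substitute the three multiplicity formulas, tracking how $\{n+1,n+2\}$ meets the shifted index set $I\cup\{r\}$. Since $r\in\{1,\dots,n\}$ we have $\{n+1,n+2\}\cap\{r\}=\0$, so the intersection pattern of $\{n+1,n+2\}$ with $I\cup\{r\}$ coincides with its pattern with $I$ alone, and the case split in \ref{classeqs} transfers verbatim.

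For the coefficient of $H$, since $n+1,n+2\notin\{r\}$ the first case of \ref{classeqs} gives $m_{\{r\}}=(\sum_{i\neq r}|a_i|)-1=(\sum_{1\leq i\leq n,\, i\neq r}|a_i|)-1$, using that $a_{n+1},a_{n+2}$ do not occur; this is the stated $H$-coefficient. For the $E_I$ terms I would split into three cases. When $n+1,n+2\notin I$ the first case yields $m_{I\cup\{r\}}=(\sum_{i\notin I\cup\{r\}}|a_i|)-1$, matching the first sum in \eqref{nastynasty1}. When $|\{n+1,n+2\}\cap I|=1$ the third case yields the minimum over positive and negative $a_i$ with $i\notin I\cup\{r\}$, matching the second sum. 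Finally, when $n+1,n+2\in I$, the set $I\cup\{r\}$ strictly contains $\{n+1,n+2\}$, so the middle case of \ref{classeqs} forces $m_{I\cup\{r\}}=0$; these terms drop out, which is why no such sum appears in the statement.

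The only real obstacle is this index bookkeeping — in particular confirming that the ``$n+1,n+2\in I$'' contributions vanish and that passing from $I$ to $I\cup\{r\}$ leaves the intersection pattern with $\{n+1,n+2\}$ undisturbed. Once these points are verified, the asserted formula \eqref{nastynasty1} follows immediately by substitution.
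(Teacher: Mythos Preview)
Your proposal is correct and follows exactly the paper's approach: the paper's entire proof reads ``This is an application of \ref{n+1ton} to the previous theorem's result,'' and you have simply filled in the index bookkeeping that this sentence leaves implicit. The observation that $r\in\{1,\dots,n\}$ forces $I\cup\{r\}\supsetneq\{n+1,n+2\}$ whenever $n+1,n+2\in I$, so that those coefficients vanish, is precisely the point needed to see why no third sum appears in \eqref{nastynasty1}.
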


\begin{proof} Apply \ref{n+1ton} to \ref{an0}. \end{proof}

We now prove the main result.
\begin{proof}[Proof of \eqref{classeqs}]
We can describe the interior of $\Lambda_\z$ as follows:
 $$[ \mb{P}^1;p_1,\dots,p_{n+2}] \in \Lambda_\z \cap M_{0,n+2}$$

$$\Llra \vphi ([\mb{P}^1; p_1,\dots,p_{n+2}] )=[C; q_1,\dots,q_n] \in D_\z$$ 

$$ \Llra \E g \in k(C) \colon \dv(g) = \sum_{i=1}^{n}a_i q_i\text{, for $g$ regular and invertible at the node } $$

\begin{equation}\label{nodal} \Llra \E h \in k(\mathbb{P}^1) \colon \dv(h) = \sum_{i=1}^n a_i p_i \text{ and } h(p_{n+1} ) = h(p_{n+2}) \end{equation}

Evidently  $h(x) = \prod_{i=1}^n(x-p_i)^{a_i}$ on an appropriate affine chart for a representative of $[\Pj^1;p_1,\dots,p_{n+2}]$. Thus, the condition that $$x=[\Pj; p_1,\dots,p_{n+2}] \in \Lambda_{\bf a}$$ is equivalent to requiring that any $(\Aff^1; q_1, \dots, q_{n+2})$ mapping to $x$ under the map $\chi$ from Fulton-MacPherson configuration space satisfies  $h(q_{n+1})= h(q_{n+2})$. This gives an equation $F$ specifying $\Lambda_\z \cap M_{0,n+2}$:

\begin{equation}\label{F} \prodl_{a_i \geq 0} (x_{n+1}\! -\! x_i)^{|a_i|} \prodl_{a_i \leq 0}(x_{n+2}\!-\!x_i)^{|a_i|} - \prodl_{a_i \leq 0} (x_{n+1} \!-\! x_i)^{|a_i|} \prodl_{a_i \geq 0}(x_{n+2}\!-\! x_i)^{|a_i|}.\end{equation}

Note that \ref{irredG} implies that $\Lambda_\z= \vphi_{*}^{-1}(D_\z)$ is irreducible. Since $F$ is the correct equation for $\Lambda_\z$ on $M_{0,n+2}$, only boundary terms of the form $x_i-x_j$ for $i \neq j$ can divide $F$. 

\begin{Claim}\label{dividingclaim} For $m \in \mathbb N$, $(x_i - x_j)^{m}$ divides $F$ if and only if $$ \{ i,j\}= \{n+1,n+2\}$$ and $m=1$. \end{Claim} We obtain \ref{dividingclaim} in the course of proving the formula for classes: the claim is equivalent to the assertion that the multiplicity of $F$ along $\Diag_{\{i,j\}}$ is zero unless $\{i,j\} = \{n+1,n+2\}$, in which case it is $1$. Given the claim, we recover the equation of the theorem.

\par With notation as from \ref{polynomialclasses}, recall that given $f \in k[x_1,\dots,x_{n+2}]$ such that $\phi_{*}^{-1}(V(f))=\bp(D)$, the class of the pull-back $\pi_{n+3}\I(D) \subset \M_{0,n+3}$ is $$ dH - \suml_{\substack{1 \leq |I| \leq n-1 \\ I \subset \{1, \dots, n+2\}}} m_I E_I,$$ where $d$ is the degree of $f$ and $m_I$ is the multiplicity of $f$ along the partial diagonal $\Diag_J= \{x_i = x_j \col i,j \in J \}$ for $J = \{1,\dots, n+3\} \sm I$. Hence we must compute the multiplicity of $F$ from \eqref{F} along partial diagonals $\operatorname{Diag}_J$ with $4 \leq |J| \leq n+2$. The multiplicity along a diagonal will be the multiplicity at a general point. To compute the multiplicity at an arbitrary point $b=(-b_1, \dots, -b_{n+2})$, we make the substitution $x_i \mapsto x_i+b_i$ and determine the degree of the initial term of the resulting equation as a polynomial in $x_i$. To get the multiplicity at a general point $b \in \Diag_J$, we set $b_i = t$ for $i \in J$, and then compute the minimum degree among nonzero monomials as a polynomial in $x_i$. \par

There are several cases to consider. Throughout, we define $k_J$ to be the multiplicity of $F$ along a partial diagonal $\Diag_J$ and let $N:= \{1, \dots, n\}$. To simplify notation, define $s(x)=n+1$ if $x \geq 0$, $s(x)=n+2$ if $x <0$. 

\begin{enumerate}
\item $n+1, n+2 \in J.$ 
\begin{itemize}
\item For $\alpha \in  \{n+1,  n+2 \}$ and $i \in J \cap N$ we substitute $(x_\alpha -x_i) \mapsto (x_\alpha -x_i)$.
\item For $i \notin J \cap N$ we substitute $(x_\alpha -x_i) \mapsto (x_\alpha -x_i -b_i+t)$.
\end{itemize} The initial term as a polynomial in $x_i$'s is then:

\begin{equation}\label{144} \prodl_{i \in J \cap N} (x_{s(a_i)}-x_i)^{|a_i|} \prodl_{i \in J^c \cap N}(t-b_i)^{|a_i|}$$ $$ -   \prodl_{i \in J \cap N } (x_{s(-a_i)}-x_i)^{|a_i|}\prod_{i \in J^c \cap N}(t-b_{i})^{|a_i|}.\end{equation}

If $|J| \geq 3$ and $J \cap N$ is not contained in the set $$A_0:=\{i \col a_i =0\}$$ the summed terms have distinct  prime factors, so \eqref{144} is nonzero. Hence

 $$k_J = \sum_{i \in J}|a_i|,$$
 for $J$ containing some $i\in N$ with $a_i \neq 0$.   \par
If $J\cap N \subset A_0$, then \eqref{144} is indeed zero, but the entire polynomial will be the same as that obtained via the requisite substitution for computation of the multiplicity of $F$ along $\Diag_{\{n+1,n+2\}}$. This particular substitution results in a coefficient of $x_{n+1}$ is given by 

$$ \sum_{a_i > 0} |a_i|(t-b_i)^{|a_i|-1}\prodl_{j \neq i}(t-b_j)^{|a_j|}$$ $$-\sum_{a_i < 0} |a_i|(t-b_i)^{|a_i|-1}\prodl_{j \neq i}(t-b_j)^{|a_j|},$$ which is nonzero since the summands have pairwise distinct prime factors. This shows that $k_J=1$ for $J \cap N \subset A_0$. \par

In particular, the multiplicity of our equation along $V(x_{n+1}-x_{n+2})$ is $1$. This proves part of \ref{dividingclaim}: $$\max\{ m \col (x_{n+1} -x_{n+2})^m \text{ divides } F \} =1.$$ 
\item $n+1, n+2 \notin J.$ 
\begin{itemize}
\item \par For $i \in J$ we substitute $(x_\alpha -x_i) \mapsto (x_\alpha -x_i -t+b_\alpha)$.
\item \par For $i \in N \sm J$ we substitute $(x_\alpha -x_i) \mapsto (x_\alpha -x_i -b_i+b_\alpha)$.
\end{itemize}The constant term of the resulting polynomial in $x_i$ is

\begin{equation}\label{*} \prod_{i \in J} (t-b_{s(a_i)})^{|a_i|} \prod_{i \notin J}(b_i-b_{s(a_i)})^{|a_i|}\end{equation} $$ -  \prod_{i \in J} (t-b_{s(-a_i)})^{|a_i|}\prod_{i \notin J}(b_i-b_{s(-a_i)})^{|a_i|} .$$ If there is an $i \in N\sm J$ with $a_i \neq 0$, then we necessarily have a monomial $(b_i -b_{s(a_i)})$ dividing one term but not in the other, and the difference in nonzero. Hence $N \sm J \not \subset A_0$ implies that $k_J = 0$. Now suppose $a_i=0$ for all $i \in N \sm J$. In this case, \eqref{*} is zero. Let $r = \suml_{a_i > 0} |a_i| = \suml_{a_i < 0} |a_i|$. The next lowest term as a polynomial in $x_i$ includes a summand:
\begin{equation} \label{**}[ \, (t-b_{n+1})^{r-1}(t-b_{n+2})^{r} - (t-b_{n+1})^r(t-b_{n+2})^{r-1} \,] (\sum_{\substack{i \leq n \\ a_i > 0}} x_i)  \end{equation} There are other degree $1$ contributions, but these do not involve $x_i$ for $i \leq n$ and $a_i>0$, so to conclude $k_{ \{1,\dots,n \} } =1$ it suffices to note that \eqref{**} is nonzero. This shows $$k_J =  1 \text{  if }N \sm J  \subset \{ i \col a_i =0 \}$$
\begin{equation}\label{ijzero} k_J  = 0 \text{ otherwise. }\end{equation}

In particular, if all $a_i$'s are nonzero, then $k_J \neq 0$ if and only if $J = \{1, \dots, n\}$ in which case $k_J=1$.

\item$|\{n+1, n+2\} \cap J| =1$. Without loss of generality, assume $n+1 \in J$ and $n+2 \notin J$; the argument is symmetric. 
\begin{itemize}
\item For $i \in N \cap J $ we substitute $(x_{n+1} -x_i) \mapsto (x_{n+1} -x_i)$ and $(x_{n+2}-x_i) \mapsto (x_{n+2}-x_i -t+b_{n+2})$.
\item For $i \in N\sm J $ we substitute $(x_{n+1} -x_i) \mapsto (x_{n+1} -x_i-b_i+t)$ and $(x_{n+2}-x_i) \mapsto (x_{n+2}-x_i -b_i+b_{n+2})$.
\end{itemize} Define $h_{j,i}=(x_{n+j}-x_i)$ for $j \in \{1,2\}$ and $1 \leq i \leq n$. With this notation, substituting gives $F=$  

$$ \prod_{\substack{i \in J \\ a_i > 0}} \!\! (h_{1,i})^{|a_i|}\!\!  \prod_{\substack{i \notin J\\ a_i > 0}}\!\! (h_{1,i} -b_i+t)^{|a_i|}\!\! \prod_{\substack{i \in J \\ a_i < 0}}\!\! (h_{2,i}-t+b_{n+2})^{|a_i|}\!\! \prod_{\substack{i \notin J\\ a_i < 0}}\!\! (h_{2,i} -b_i+b_{n+2})^{|a_i|}\!\!-\!\!$$ $$ 
 \prod_{\substack{i \in J \\ a_i < 0}} \!\!(h_{1,i})^{|a_i|}\!\! \prod_{\substack{i \notin J\\ a_i < 0}} \!\!(h_{1,i}-b_i+t)^{|a_i|}\!\!  \prod_{\substack{i \in J \\ a_i > 0}}\!\! (h_{2,i}-t+b_{n+2})^{|a_i|} \!\! \prod_{\substack{i \notin J\\ a_i > 0}}\!\! (h_{2,i}-b_i+b_{n+2})^{|a_i|}\!\!.$$ The initial term of the expanded expression is:

\begin{equation}\label{145}  \prod_{\substack{i \in J \\ a_i > 0}} \!\!(x_{n+1} -x_i)^{|a_i|} \!\!  \prod_{\substack{i \notin J\\ a_i > 0}}\!\!(t-b_i)^{|a_i|}\!\!\prod_{\substack{i \in J \\ a_i < 0}}\!\! (b_{n+2}-t)^{|a_i|} \!\! \prod_{\substack{i \notin J\\ a_i < 0}}\!\! (b_{n+2}-b_i)^{|a_i|}- $$ $$
\prod_{\substack{i \in J \\ a_i < 0}}\!\! (x_{n+1} -x_i)^{|a_i|}\!\!  \prod_{\substack{i \notin J\\ a_i < 0}} \!\! (t-b_i)^{|a_i|} \!\! \prod_{\substack{i \in J \\ a_i > 0}}\!\!(b_{n+2}-t)^{|a_i|} \!\!\prod_{\substack{i \notin J\\ a_i > 0}}\!\! (b_{n+2}-b_i)^{|a_i|}.\end{equation}The two terms comprising \eqref{145} necessarily have distinct factors regardless of the relationship between $A_0$ and $J$, so that the difference is nonzero. Hence

$$k_J  = \min \{ \! \sum_{\substack{a_i \geq 0 \\ i \in J}}|a_i|, \sum_{\substack{a_i \leq 0 \\ i \in J}}|a_i| \}$$
\end{enumerate}

These formulae do not quite give the class of the divisor $\Lambda_\z$. Assuming \eqref{dividingclaim}, the actual equation specifying $\Lambda_\z$ is $\frac{F}{x_{n+1}-x_{n+2}}$. Hence the relevant multiplicities giving class coefficients are computed by subtracting the multiplicity of $(x_{n+1}-x_{n+2})$ along $\Diag_J$ from each $k_J$ computed above. Our formulae will therefore be as follows:

\begin{enumerate}
\item If $n+1, n+2 \in J$, substituting to compute the multiplicity along $\Diag_J$ gives $(x_{n+1}-x_{n+2}) \mapsto (x_{n+1} - x_{n+2})$ so the multiplicity is 1. Hence, defining $M= \{1, \dots, n+2 \}$:
$$m_{M\sm J}=k_J -1= (\sum_{i \in J} |a_i|)-1,$$ for $J\cap N \not\subset \{ i \col a_i =0\}$, and

$$m_{M \sm J} = 0,$$ for $J \cap N \subset \{ i \col a_i=0\}$. 
\item If $n+1, n+2 \notin J$, we substitute $(x_{n+1}-x_{n+2})\mapsto(x_{n+1}-x_{n+2}+b_{n+1}-b_{n+2})$, which shows the multiplicity of $x_{n+1} - x_{n+2}$ is zero along $\Diag_J$. Hence $$m_{M \sm J} = 0$$ unless $M \sm J \subset A_0$, in which case
$$m_{M \sm J} =k_J= 1$$
\item $|\{n+1, n+2\} \cap J| =1$: Evidently the multiplicity of $(x_{n+1}-x_{n+2})$ here is also zero and
$$m_{N \sm J}=k_J= \min \Big{\{} \sum_{\substack{i \in J \\ a_i \geq 0}}|a_i|, \sum_{\substack{i \in J \\ a_i \leq 0}}|a_i|\Big{\}}. $$ \end{enumerate} Reformulating (1)-(3) above gives the theorem. \par 

It remains to complete the proof of \ref{dividingclaim}.  We have already noted that $(x_{n+1}-x_{n+2})^k$ divides $F$ if and only if $k=1$. To see that no other $(x_i -x_j)$ divides $F$ for $i\neq j$ and $i,j\leq n$, recall that  $F$ has multiplicity zero along each  partial diagonal $V(x_i -x_j)$ by \eqref{ijzero}. From inspection of \eqref{F}, it is evident that neither $(x_{n+1}-x_j)$ nor $(x_{n+2}-x_j)$ can divide $F$ for $j \leq n$.  \end{proof}

\begin{Example}\label{exx} Let $D_k:= \Lambda_{(k,1,-1,-1,\dots,-1)} \subset \M_{0,k+5}$. Let $K=\{k+4,k+5\}.$ We apply the formulas from \ref{nastynasty} to compute the class of $D_k$ with respect to the Kapranov basis for $\M_{0,k+5}$ using index $1$.  Note that in our case 
$$ \suml_{\substack{1 \leq i \leq k+3 \\ i \neq 1}} |a_i|-1 = \big{(}\sum_{2 \leq i \leq k+3}( 1)\big{)} -1 = k+1,$$ and for $K \cap I = \0$,
$$\suml_{\substack{i \notin I \cup \{1\}}} |a_i|-1=  |\{2, \dots , k+3 \} - I | -1 = k+2 - |I|-1 = k+1 - |I|.$$

For $| K \cap I| =1$, the  coefficient is

$$\min \Big{\{} \suml_{\substack{0 \leq a_i \\  i \notin I \cup \{1\}}}|a_i|, \suml_{\substack{0 \geq a_i \\ i \notin I \cup \{1\} }}|a_i|\Big{\}}.$$  If $2 \in I$, then the minimum is zero; if not, the minimum is always one, since cardinality considerations show $$ \{i \col a_i \leq 0\} \not\subset I.$$ Hence we have that
\begin{equation}\label{Dk} D_k \sim (k+1)H - \suml_{i=1}^{k}\Big{ (}\suml_{\substack{K\cap I=\0 \\ |I|=i}} (k+1-i) E_I \Big{)} - \suml_{\substack{|K\cap I|=1 \\ 2 \notin I}} E_I.   \end{equation}
\end{Example}

\begin{Example}\label{LM} Consider the  $(k+1)$-tuple $(k,-1,-1,-1, \dots, -1)$ which gives a divisor on $\M_{0,k+3}$. Note that, by \ref{restricting}, this divisor can be obtained by intersecting $D_k$ from \ref{exx} with the boundary where the first two markings ``collide." Using  \ref{nastynasty}, we compute the class of $L_k :=\Lambda_{(k,-1,\dots,-1)}$ with respect to the index $1$ Kapranov basis:

$$L_k \sim (k-1) H - \sum_{i=1}^{k-1} \sum_{\substack{ |I|=i \\ I \subseteq \{2,\dots, k+1\} }} \!\! \big(k-1-i\big) E_I.$$ Note that all $E_I$ with $k+2 \in I$ or $k+3 \in I$ do not contribute to the class of $L_k$. 
\end{Example}

We return to general results on Chen--Coskun divisors. The next theorem relates certain Chen--Coskun divisors to spherical hypertree divisors (defined in \ref{ht}).

\begin{Theorem}\label{sphere} If ${ \bf a}=(1,1,\dots, -1, -1, \dots)$ is a $2k$-tuple with $\sum a_i =0$, then $ \Lambda_{\z} = D_{\Gamma}$ where $\Gamma$ is the spherical hypertree divisor associated to a bipyramidal bicolored spherical triangulation with $2k$ triangles.  \end{Theorem}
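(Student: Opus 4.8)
The plan is to make the hypertree $\Gamma$ completely explicit, and then to identify the two divisors by showing that they are cut out by the same polynomial in the sense of \ref{basics}. First I would reconstruct the combinatorics of $\Gamma$ from the numerics. Here $\z$ is a $2k$-tuple, so $\Lambda_\z \subset \M_{0,2k+2}$ and $n=2k$; a bicolored triangulation of the two-sphere with $2k$ black triangles has $4k$ faces, hence $6k$ edges and, by Euler's formula, $2k+2$ vertices, matching $|N|=2k+2$. The bipyramidal such triangulation is the bipyramid over a $2k$-gon: two apexes together with equatorial vertices $v_1,\dots,v_{2k}$. The symmetry exchanging the two apexes swaps the two color classes, and this corresponds on the Chen--Coskun side to exchanging $p_{n+1}\leftrightarrow p_{n+2}$ together with negating $\z$ (which leaves $\Lambda_\z$ unchanged); this pins the apexes to the glued markings $n+1,n+2$. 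The $2k$ black triples split as $k$ through $n+1$ and $k$ through $n+2$, each equatorial vertex lying in exactly one black triple through each apex, and the two families of triples form two perfect matchings of the $v_i$ whose union is the equatorial $2k$-cycle. Since $D_\Gamma$ is defined only up to permutation and $\Lambda_\z$ depends on $\z$ only up to permutation and sign, I am free to choose the labeling of the $v_i$ so that the alternating bicoloring around the equator records the sign pattern of $\z$.

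The crucial first observation is a degree match. Every part of $\Gamma$ is a triple, so in \ref{htpoly} each $k_i=3$, the correction product $\prod_i(x_{a_{i1}}-x_{a_{i2}})^{k_i-3}$ is empty, and $D_\Gamma$ is cut out by $\det\bf{B}$ itself, a $(2k-1)\times(2k-1)$ determinant which (as in the proof of \ref{htpoly}) has degree $d-1=2k-1$. On the other side, specializing \eqref{polypoly} to $a_i=\pm1$, the divisor $\Lambda_\z$ is cut out by $F/(p_{n+1}-p_{n+2})$, where $F$ is the difference of two products of $2k$ linear forms, so of degree $2k-1$ as well, consistent with $d=(\sum_i|a_i|)-1=2k-1$ from \ref{classeqs}. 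I would therefore aim to prove that $\det\bf{B}$ and $F/(p_{n+1}-p_{n+2})$ agree up to a nonzero scalar. Both are irreducible (they specify irreducible divisors by \ref{htpoly} and \ref{irredG}), and the construction of \ref{basics} recovers an irreducible non-boundary divisor from its specifying polynomial; hence proportionality of the two polynomials would force $\Lambda_\z=D_\Gamma$ directly, with no further input.

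To establish this identity I would exploit the sparse, cyclic structure of $\bf{A}$: each equatorial column has exactly two nonzero entries, while the only dense columns are the two apex columns $n+1$ and $n+2$, each with $k$ nonzero entries. Choosing the pivot triangle of \ref{htpoly} incident to an apex and performing column operations that add successive columns around the equatorial cycle, I expect $\det\bf{B}$ to telescope into a difference of two products over the equatorial matchings, with each apex factor $(p_{n+1}-p_i)$ or $(p_{n+2}-p_i)$ distributed according to the bicoloring and with the factor $(p_{n+1}-p_{n+2})$ falling out of the expansion; this should reproduce $F/(p_{n+1}-p_{n+2})$. The main obstacle is precisely this determinant evaluation: correctly tracking the signs and bookkeeping which apex factor each equatorial vertex contributes, around a cyclic configuration, is where essentially all of the work lies, and is also exactly where \eqref{dividingclaim}-type cancellation must be seen to produce the single quotient factor.

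As a safeguard I would also prepare a purely class-theoretic route. Corollary \ref{nastynasty} gives $[\Lambda_\z]$ in closed form for $a_i=\pm1$, while \ref{htpoly} together with \ref{polynomialclasses} (or the recorded database values for small $k$) computes $[D_\Gamma]$; matching the two amounts to a finite combinatorial comparison of multiplicities along the partial diagonals. Since irreducible hypertree divisors are rigid, so that $D_\Gamma$ is the unique effective divisor in its class by \cite{CT}, and since $\Lambda_\z$ is effective and irreducible by \ref{irredG} and \ref{classeqs}, equality of classes again yields $\Lambda_\z=D_\Gamma$. I expect the polynomial-identity route to be the cleaner of the two, but the class route has the advantage of reducing the problem to the diagonal multiplicities already computed in \ref{classeqs}.
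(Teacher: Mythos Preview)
Your plan is sound but takes a different route from the paper. You propose either a direct polynomial identity (evaluating $\det\mathbf{B}$ for the bipyramid and matching it to $F/(p_{n+1}-p_{n+2})$) or a class comparison followed by rigidity of $D_\Gamma$. Both would succeed if carried out, and your combinatorial identification of the bipyramid hypertree (apexes $\leftrightarrow$ glued markings, equatorial vertices split by the sign of $a_i$) is correct.

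The paper avoids the determinant computation entirely by invoking a geometric description of $D_\Gamma$ from \cite[9.5]{CT}: embedding $\mathbb{P}^1$ as a rational normal curve of degree $k$ in $\mathbb{P}^k$, the bipyramidal divisor is exactly the locus where the line $L$ through the images of the two poles meets $\tilde X\cap\tilde Y$, with $\tilde X,\tilde Y$ the hyperplanes spanned by the images of the positive and negative equatorial points. Writing $h_1,h_2$ for the linear equations of $\tilde X,\tilde Y$, the pullback $h=\varphi^*(h_1/h_2)$ has divisor $\sum a_i p_i$, and the incidence condition forces $h(p_{n+1})=h(p_{n+2})$. This exhibits every point of $D_\Gamma$ as a point of $\Lambda_\z$, and irreducibility of both (your \ref{irredG} and \cite{CT}) finishes the proof. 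The upshot: the paper's argument is a few lines of geometry with no bookkeeping, whereas your approach front-loads the work into a cyclic determinant evaluation or a case-by-case multiplicity check. Your route has the compensating virtue of being self-contained within the machinery of \ref{basics}--\ref{classeqs} and not relying on the external characterization \cite[9.5]{CT}.
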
 
\begin{proof} Since $D_\Gamma$ and $\Lambda_\z$ are irreducible, it suffices to show $D_\Gamma \cap M_{0,n} \subset \Lambda_\z$. For this, we appeal to a characterization of spherical bipyramid hypertree divisors given in \cite[ 9.5]{CT}. 

Let $\Gamma$ be the spherical bipyramid divisor on $n=2k+2$ vertices. Then there is a partition of $1,\dots,n$ into subsets $X,Y,Z$ with $|X|=|Y|=k$, and $|Z|=2$ where the indices in $Z$ correspond to ``poles" of the bipyramid and those in $X$ and $Y$ are alternating points on the ``equator:"

\begin{figure}[h]
\center
\includegraphics[scale=.3]{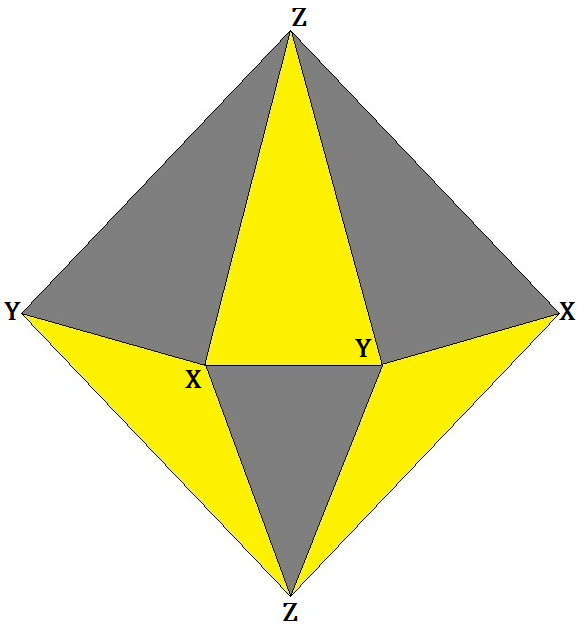}
\caption{ Spherical pyramidal triangulation with subsets $X$, $Y$ and $Z$ indicated.}
\end{figure}

Assume $X = \{1,\dots,k\}$, $Y=\{k+1,\dots,2k\}$, $Z=\{2k+1,2k+2\}$.  Consider the embedding $\eta$ of $ [\mb{P}^1; p_1,\dots,p_n]$ into $\mb{P}^{k}$ as a rational normal curve degree $k$; let $q_i= \eta(p_i)$, and $$L=\langle q_i \rangle_{i \in Z},$$  $$ \tilde{X}=\langle q_i \rangle_{i \in X}, $$ $$\tilde{Y}= \langle q_i \rangle_{i \in Y} .$$ Castravet and Tevelev show $D_\Gamma$ consists of $[\mb{P}^1;p_1,\dots,p_n]$ such that 

\begin{equation} \label{&} L \cap \tilde{X} \cap \tilde{Y} \neq \0. \end{equation}  

 Fix a representative of marked points $p_i$ for an arbitrary element in $D_\Gamma \cap M_{0,n}$. The function with zeros order one at $p_i$ for $i \in X$ and poles order one at $p_i$ for $i \in Y$ is given by $h=\eta^*(h_1/h_2)$ where $h_1$ is a linear equation of $\tilde{X}$ and $h_2$ one of $\tilde{Y}$. Let $q_{n+i} = [a_{1i}: \dots : a_{ki}]$ for $i = 1,2$. Then \eqref{&}  implies that, for some elements $s$ and $t$ in the base field,

$$h_j ([sa_{11}+ta_{12}: \dots : sa_{k1}+ta_{k2}])=0,$$ for $j=1$ or $j=2$. This together with linearity implies that 

$$\frac{h_1}{h_2}(q_{n+1})=\frac{h_1}{h_2}(q_{n+2}),$$ so that

$$\eta^*\frac{h_1}{h_2}(x_{n+1})=\eta^*\frac{h_1}{h_2}(q_{n+2}).$$

Hence $\dv(h)=p_1+\dots+p_k-p_{k+1}-\dots -p_{2k}$ and $h(p_{2k+1})=h(p_{2k+2})$, so $h$ witnesses that $[\Pj;p_1,\dots,p_n] \in \Lambda_{(1,\dots,1,-1,\dots,-1)}$.  \end{proof} The next theorem describes how any Chen--Coskun divisor arises from intersections of a ``universal divisor" of the form $\Lambda_{(1,1,\dots,-1,-1,\dots)}$ with boundary divisors. Together with the previous result, this gives a relationship between Chen--Coskun divisors and hypertree divisors: all  Chen--Coskun divisors are obtained by a sequence of restrictions of a bipyramidal spherical hypertree divisor.

\par Note that, in an attempt to clarify the proof of the theorem, we use labels $0, \dots, n+2$ for markings on $\M_{0,n+3}$ and markings $1,\dots, n+2$ on $\M_{0,n+2}$.

\begin{Theorem}\label{restricting}  Let $\z=(a_0,\dots,a_{n}) \in \Z^{n+1}$ be such that $\sum_i a_i =0$; and $\gcd(a_0,\dots,a_{n})=\gcd(a_0+a_1,a_2,\dots,a_{n-1},a_n) =1$. Define ${\bf b} = (a_0+a_1,\dots,a_{n-1},a_n) \in \Z^n$. Then  $\Lambda_\z \cap \delta_{\{0,1\}} = \Lambda_{{\bf b}}$ as a divisor on $\delta_{\{0,1\}} \simeq \ol{M}_{0,n+2}$.  \end{Theorem}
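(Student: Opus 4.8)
The plan is to reduce the statement to a matching of the two divisors' interiors, using irreducibility. Under the gcd hypotheses both sides are irreducible: $\Lambda_\z$ by \ref{irredG} applied to the $(n+1)$-tuple $\z$, and $\Lambda_{\bf b}$ by \ref{irredG} applied to the $n$-tuple ${\bf b}$ (note $\sum b_i=0$ automatically, and the second gcd condition is exactly what makes $\Lambda_{\bf b}$ irreducible). Since $\delta_{\{n,n+1\}}$ is a divisor and $\Lambda_\z$ is a non-boundary divisor, the intersection $\Lambda_\z\cap\delta_{\{n,n+1\}}$ is a divisor on $\delta_{\{n,n+1\}}\simeq\M_{0,n+2}$, and it suffices to show its unique non-boundary component is $\Lambda_{\bf b}$ with multiplicity one. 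First I would fix the identification $\delta_{\{n,n+1\}}\simeq\M_{0,n+2}$ explicitly: a general point is a nodal curve $C_0\cup C_1$ with $C_1$ a bubble carrying $p_n,p_{n+1}$ and the node $q$, and I identify the main component $C_0$, with markings $p_1,\dots,p_{n-1}$, the node $q$, and $p_{n+2},p_{n+3}$, with a point of $\M_{0,n+2}$ by letting $q$ be the $n$-th marking (of weight $a_n+a_{n+1}=b_n$) and $p_{n+2},p_{n+3}$ the two glued markings.

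The core computation is a restriction of the defining polynomial \eqref{F}. Writing $F$ for the polynomial of $\Lambda_\z$ and substituting $x_n=x_{n+1}=y$, I would separate the factors indexed by $i\in\{n,n+1\}$ from the rest: they contribute $(x_{n+2}-y)^{P}(x_{n+3}-y)^{M}$ to the first summand of $F$ and the $x_{n+2}\leftrightarrow x_{n+3}$ swap to the second, where $P$ (resp.\ $M$) is the sum of the positive parts (resp.\ of the absolute values of the negative parts) of $a_n,a_{n+1}$. Pulling out the common factor $[(x_{n+2}-y)(x_{n+3}-y)]^{\min(P,M)}$ leaves precisely the defining polynomial $F_{\bf b}$ of $\Lambda_{\bf b}$, in which the collided node carries weight $b_n=a_n+a_{n+1}=P-M$; that is, $F|_{x_n=x_{n+1}}=[(x_{n+2}-y)(x_{n+3}-y)]^{\min(P,M)}\,F_{\bf b}$. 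The prefactor is a power of the equations $(z_{n+1}-z_n)$ and $(z_{n+2}-z_n)$ that cut out boundary divisors of $\M_{0,n+2}$, so after passing to proper transforms under $\alpha$ it contributes only to the boundary, and the unique non-boundary component of the restriction is $V(F_{\bf b})=\Lambda_{\bf b}$ with multiplicity one. By \ref{dividingclaim} applied to both $\z$ and ${\bf b}$, the factor $(x_{n+2}-x_{n+3})$ occurs to the same order on each side, so dividing it out is harmless.

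To confirm the polynomial identity really computes the geometric restriction, I would supply the matching limiting-function argument on the interior. A general point of $\delta_{\{n,n+1\}}$ lies in $\Lambda_\z$ iff it is a limit of smooth curves carrying functions $h$ with $\dv(h)=\sum_{i\le n+1}a_ip_i$ and $h(p_{n+2})=h(p_{n+3})$, as in \eqref{nodal}. Rescaling across the degeneration $p_n,p_{n+1}\to q$ shows that the limiting function restricts on $C_0$ to one with divisor $\sum_{i\le n-1}a_ip_i+(a_n+a_{n+1})q=\sum_{i\le n}b_iz_i$ still satisfying $h(z_{n+1})=h(z_{n+2})$, i.e.\ $C_0\in\Lambda_{\bf b}$; the bubble $C_1$, with only the three special points $p_n,p_{n+1},q$, imposes no further constraint. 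Conversely, a function realizing $C_0\in\Lambda_{\bf b}$ extends across the node to exhibit $C_0\cup C_1$ as a limit of interior points of $\Lambda_\z$, giving $\Lambda_{\bf b}\subseteq\Lambda_\z\cap\delta_{\{n,n+1\}}$. Matching interiors together with irreducibility of $\Lambda_{\bf b}$ then force equality of the non-boundary parts.

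The hard part will be the bookkeeping of the boundary prefactor exactly when $a_n$ and $a_{n+1}$ have opposite signs, i.e.\ $\min(P,M)>0$: one must check that $[(x_{n+2}-y)(x_{n+3}-y)]^{\min(P,M)}$ corresponds to deeper boundary strata of $\M_{0,n+2}$ and is not a component of the proper transform $\Lambda_\z$, so that the divisor-theoretic equality, not just set-theoretic equality on the interior, holds. Making the rescaling of $h$ rigorous in a one-parameter family, controlling the order of vanishing at the node from both branches so the limit descends to the asserted divisor on $C_0$, is the other delicate point, and it is precisely here that the geometric limit and the polynomial restriction must be reconciled.
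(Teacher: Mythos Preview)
Your core idea---restricting the defining polynomial to the diagonal $x_n=x_{n+1}$ and comparing with the polynomial for $\Lambda_{\bf b}$---matches the paper's, and your factorization $F_\z|_{x_n=x_{n+1}=y}=[(x_{n+2}-y)(x_{n+3}-y)]^{\min(P,M)}F_{\bf b}$ is actually more explicit than the paper's treatment (the paper works on the complement $B$ of all diagonals, where the prefactor is a unit, and so only needs that the two polynomials cut out the same locus there). Your limiting-function argument is a correct alternative for the interior equality, though redundant once the polynomial identity is in hand.

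The genuine gap is in upgrading interior agreement to full divisor equality. Your framing ``it suffices to show its unique non-boundary component is $\Lambda_{\bf b}$ with multiplicity one'' is not a sufficient condition: you must also show that $\Lambda_\z\cap\delta_{\{n,n+1\}}$ carries no boundary components at all. You flag this as the ``hard part,'' but the route you propose---reading the boundary contribution off the prefactor---cannot close it. The substitution $x_n=x_{n+1}$ is a computation in $\Aff^{n+3}$, not in $\M_{0,n+3}$; the prefactor only records where the restricted polynomial acquires extra vanishing along two particular codimension-one diagonals of $\Aff^{n+2}$, whereas the boundary of $\delta_{\{n,n+1\}}\simeq\M_{0,n+2}$ arises from blowing up \emph{all} partial diagonals. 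The multiplicity of $\Lambda_\z\cap\delta_{\{n,n+1\}}$ along a given boundary divisor of $\M_{0,n+2}$ is governed by the multiplicities of $F_\z$ along deeper diagonals of $\Aff^{n+3}$ and by how Kapranov generators restrict from $\M_{0,n+3}$---not by the prefactor. In particular, even when $a_n,a_{n+1}$ have the same sign and the prefactor is trivial, boundary components of the intersection are not a priori excluded by your argument. The paper closes this differently: after establishing interior agreement, it restricts the full Kapranov-basis class of (the pullback of) $\Lambda_\z$ from Theorem~\ref{classeqs} to $\delta_{\{n,n+1\}}$ term by term, checking that each $E_J$ with nonzero coefficient restricts to a Kapranov generator $E_{J\sm\{n+1\}}$ or to zero, verifies that the resulting expression is exactly the class formula for $\Lambda_{\bf b}$, and concludes since a boundary-supported divisor of trivial class must vanish.
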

\begin{proof} 
Consider the following diagram:
\begin{equation}\label{diag2}\begin{CD}
\mb{A}^{n+3}  @<\phi<< \Aff^1[n+3]  @.@>\chi>> \ol{M}_{0,n+3}    \\
 \bigcup \! |   @.  \bigcup \! |  @ . @. \bigcup \! |  \\            
\Diag_{\{0,1\}}  \!\! @<\bar\phi<< \Delta_{\{0,1\}} @.@>\bar\chi>> \delta_{ \{0,1\}}   \\ 
@ A \nu AA                 @V p_0 VV \!\!\!\!\!\!\!\!\!\!\!\!\!\!\!\!\!\! @AA\tau_{0} A  @ V \pi_{0} VV \!\!\!\!\!\!\!\!\! @AA\sigma_{0}A     \\
 \Aff^{n+2} @<\phi<<  \Aff^{1}[n+2]     @.   @>\chi>> \ol{M}_{0,n+2}      
\end{CD}\end{equation} \\

The maps $\phi$ and $\chi$ are as in \eqref{diag1}. The maps $\balpha$ and $\bbeta$ are restrictions of these to the indicated subsets. The isomorphisms of $\delta_{ \{0,1\} }$ with $\M_{0,n+2}$ and $\Delta_{\{0,1\}}$ with $\Aff^{1}[n+2]$ are restrictions of the index $0$ forgetful morphisms; the inverse maps are given by index $0$ sections, $\sigma_{0}$ and $\tau_{0}$. The isomorphism $\nu$ is defined by $$(x_1, \dots, x_n,x_{n+1},x_{n+2}) \mapsto (x_1,x_1, \dots, x_n,x_{n+1},x_{n+2}).$$ (That is, the map ``repeats the first index.")

\par Commutativity of the lower left rectangle follows from the fact that the iterated blow-up defining $\Aff^1[n+3]$ restricts to an iterated blow-up of the subspace $\Diag_{\{0,1\}}$; this coincides with the Fulton-MacPherson construction when $\Diag_{\{0,1\}}$ is naturally identified with $\Aff^{n+2}$. Commutativity of the lower right rectangle is immediate from that of \eqref{diag1}.

 Define\\
$$\partial =  \bigcup_{\substack{|I| \geq 2 \\ I \neq \{0,1\} }}\!\!\! \delta_I$$
$$V = \bigcup_{ \{i,j\} \neq \{0,1\}}\!\!\!\! \Diag_{\{i,j\}},$$
 $$\delta_{ \{0,1\} }^0=\delta_{ \{0,1\} } \sm \partial,$$  
$$A= \Aff^{n+3} \sm V,$$
$$M = M_{0,n+3} \cup \delta_{ \{0,1\} }^0,$$
$$B = \Aff^{n+2} \sm \{\text{ diagonals }\}.$$\\  Let $\Lambda_{\z}^0= \Lambda_{\z} \cap M$ and $\Lambda_{{\bf b}}^0 = \Lambda_{{\bf b}} \cap M_{0,n+2}$. We will show that $\Lao \cap \delta_{ \{0,1\} }= \Lbo$, so $\Lambda_{\z} \cap \delta_{ \{0,1\} }$ and $\Lambda_{ {\bf b}}$ can differ only by boundary divisors of $ \delta_{ \{0,1\} }\simeq \M_{0,n+2}$. Given this, for equality of the divisors it will suffice to show that they have the same classes. 

Let $F$ be the polynomial in $k[x_0, \dots, x_{n+2}]$ specifying $\Lambda_\z$; the form of $F$ is given in \eqref{polypoly}. Since $\Lao \cap \delta_{ \{0,1\} }= \sigma_0^{-1}(\Lambda_\z \cap M)$ and the diagram commutes, we have that

$$\bp(\Lao \cap \delta_{ \{0,1\} })= \chi^{-1}(\sigma_0^{-1}(\Lambda_\z \cap M)) = \tau_0^{-1}(\bp(\Lambda_\z \cap M)) $$ $$= \tau_0^{-1}(\phi_*\I(V(F) \cap A)) 
= \phi_*\I(\nu^{-1}(V(F)) \cap B)= \phi\I(V(\nu^{*}F) \cap B).$$ \\
But $h^{*}F$ is simply $F(x_0, \dots, x_n,x_{n+1},x_{n+2})$ with $x_1$ substituted for $x_0$. If both $a_1$ and $a_0$ are non-negative, we quite literally add exponents and obtain the equation $G(x_1,\dots, x_{n+2})$ which specifies $\Lambda_\zb$ on $\M_{0,n+2}$. If $a_0 \geq 0$ and $a_1<0$, the substitution yields

$$h^{*}F = \big((x_{n+1} - x_1)(x_{n+2}-x_1)\big)^{\min\{ |a_0|,|a_1|\}}G.$$ Since the factors $(x_{n+1} - x_1)$ and $(x_{n+2}-x_1)$ contribute boundaries, the divisor specified by $h^*F$ coincides with that specified by $G$ on $M_{0,n+2}$. Hence $\Lambda_{\z} \cap \delta_{ \{0,1\} }$ and $\Lambda_\zb$ are specified by the same polynomial on the interior of $\M_{0,n+2}$, as claimed.

We now show that the classes of $\Lambda_{\z} \cap \delta_{ \{0,1\} }$ and $\Lambda_{ {\bf b}}$ are the same. Note that $H$ and $\delta_I$ for $|I \cap \{0,1\}|=1$ restrict to the zero class on $\delta_{\{0,1\}}$. For $\{0,1\} \subsetneq I$, we have that $\delta_I \cap \Lambda_\z = \delta_{ I \sm \{0\}}$ on $\delta_{\{0,1\}}$ naturally identified with $\M_{0,n+2}$ with markings $1, \dots, n+2.$ The divisor $\delta_{\{0,1\}}$ restricts to $-H$ on $\delta_{\{0,1\}}$, the negative of a Kapranov hyperplane class with respect to the index $1$ Kapranov basis. Applying this to the class of $\Lambda_\z$, we see that if

$$\Lambda_\z \sim dH - \sum m_I E_I,$$ then 

$$\Lambda_\z \cap \delta_{\{1\}} \sim m_{\{1\}} H - \sum_{ \{1\} \subsetneq I} m_I \delta_{I}.$$ 

Note that $1 \leq |I| \leq n-1$ so that $1 \leq |I \sm \{1\}| \leq n-2$ for $\{1 \} \subsetneq I$, and $\delta_I = E_{I \sm \{1\}}$ on $\M_{0,n+2}$ using the Kapranov basis in index $1$. Hence, to show that $\Lambda_\z \cap \delta_{\{0,1\}}$ has the same class as $\Lambda_\zb$, we must verify that if $\Lambda_\zb = gH - \sum n_I E_I$, then $m_{\{1\}} = g$ and $n_{I \sm \{1\}}=m_I$. \par

By \ref{classeqsred}, we see that $g = \sum_{i \neq 1} |b_i|-1= \sum_{i \neq 0,1} |a_i|-1=m_{\{0,1\}}$ as desired. Moreover, $$|\{n+1,n+2\} \cap I |= | \{ n+1, n+2 \} \cap I \sm \{1\}|,$$ $$\{i \col a_i \neq 0 \} \subset I \iff \{ i \col b_i \neq 0 \} \subset I \sm \{1\},$$ and $$\{i \col b_i \neq 0\} \subset \{2,\dots, n\} \sm I \iff \{ i \col a_i \neq 0\} \subset \{1,\dots,n \} \sm I.$$ This implies that the coefficient of $E_I$ in the class of $\Lambda_\z$ and of $E_{I \sm \{1\}}$ in the class of $\Lambda_\zb$ are computed using the same formula from \eqref{classeqsred}. Noting that the formulas depend only on sums over the complements of $I \cup \{0\}$ and $I$, respectively, so that $m_I = n_{I \sm \{1\}}$ as desired. \end{proof}

%---------------------------------------------------------------CoUNTEREXAMPLE----------------------------------------------------------
\section{Counterexample to the Castravet--Tevelev conjecture}\label{counterex}
In \ref{exx}, we computed the class of $D_k=\Lambda_{(k,1,-1,\dots)}$ on $\M_{0,k+5}$ with respect to the index $1$ Kapranov basis: 

\begin{equation}\label{specialclass}D_k \sim (k+1)H - \!\!\!\suml_{\substack{K\cap I=\0 \\ |I|=1}}\!\!\!  k E_I -\!\!\! \suml_{\substack{K\cap I=\0 \\ |I|=2}}\!\!\! (k-1) E_I- \dots -\!\!\!  \suml_{\substack{K\cap I=\0 \\ |I|=k}}\!\!\! E_I - \suml_{\substack{|K\cap I|=1 \\ 2 \notin I}} E_I,\end{equation}   where we define $K=\{k+4,k+5\}$.

The divisor $D_k$ is evidently effective. For extremality we appeal to the criterion given by \ref{edgefact}: we construct an irreducible covering family of curves with $C \cdot D_k < 0$. Define a Kapranov map $\psi_1$ from $\ol{M}_{0,k+5}$ to $\mb{P}^{k+2}$. Let $p_2,\dots,p_{k+5}$ be the points in $\mb{P}^{k+2}$ such that $E_I \mapsto \langle p_i \rangle_{i \in I}$. Inspection of \eqref{specialclass} shows that the image  $S=\psi_1(D_k)$ is a hypersurface of degree $k+1$ with a point of multiplicity $k$ at each $p_i$ for $2 \leq i \leq k+3$. Moreover, we have $2k+2$ codimension $2$ subspaces $\langle p_i \rangle_{i \in J}$ for $|J|=k+1$, $|K \cap J|=1$ and $2 \notin J$ which are contained with multiplicity $1$. To see this last fact, note that there are $k+1$ subsets of $\{3, \dots, k+3\}$ of size $k$, obtained by omitting a single index. Augmenting these subsets with either index $k+4$ or index $k+5$ gives $2k+2$ codimension $2$ spans as claimed. 

In $\Pj^{k+2}$, consider the family of curves $\mathcal{G}$ obtained by intersecting a 2-plane through $p_2$ with $S$. Let $\mathcal{F}$ denote the covering family of $D_k$ obtained by taking proper transforms of curves in $\mathcal{G}$ with respect to $\psi_1$.

\begin{figure}[h]\label{cubicthreefoldpic}
\center
\includegraphics[scale=0.35]{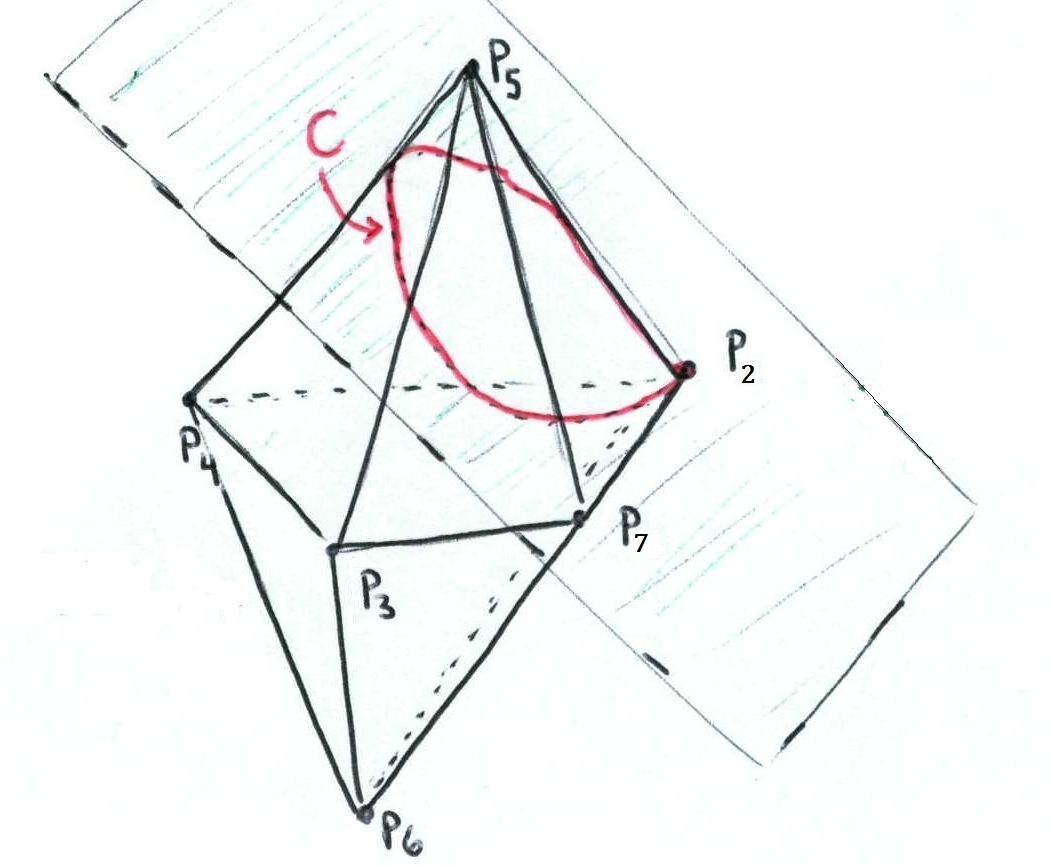}
\caption{Constructing a covering family for the image of $D_2$ in $\mb{P}^4$ under the Kapranov morphism in index $1$.}
\end{figure}

\begin{Lemma}\label{pairing} A general curve $C$ in the covering family $\mathcal{F}$ of $D_k$ has intersection pairing $-1$ with $D_k$. \end{Lemma}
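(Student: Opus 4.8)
The plan is to compute the pairing $C \cdot D_k$ by projecting the intersection down to $\mb{P}^{k+2}$ along the Kapranov morphism $\psi_1$, using the class of $D_k$ recorded above. Write $D_k \sim (k+1)H - \sum_I m_I E_I$, where $C$ is the proper transform of the plane section $\tilde{C} = P \cap S \in \mathcal{G}$. Since $H = \psi_1^{*}h$, the projection formula gives $C \cdot H = (\psi_1)_{*}C \cdot h = \tilde{C}\cdot h = \deg(P\cap S) = k+1$, and reduces the problem to evaluating $C \cdot E_I$ for the finitely many $I$ with $m_I \neq 0$. Thus
\[
C \cdot D_k = (k+1)^2 - \sum_I m_I\,(C\cdot E_I),
\]
and it suffices to show that the correction term equals $(k+1)^2 + 1 = k^2 + (2k+2)$.

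First I would dispose of the exceptional divisors that $C$ does not meet. Since $P$ is a general $2$-plane through $p_2$, it avoids $p_i$ for $i \neq 2$ and meets a span $\langle p_i\rangle_{i\in I}$ only when $\operatorname{codim}\langle p_i\rangle_{i\in I} \leq 2$. Among the $I$ with $m_I \neq 0$ and $2\notin I$, all of which have $\operatorname{codim}\langle p_i\rangle_{i\in I} = k+3-|I|$, this forces $|I| = k+1$, and then $m_I\neq 0$ forces $|K\cap I|=1$: these are exactly the $2k+2$ codimension-$2$ spans, each met transversally by $P$ in a single general point $q_I \in S$ with $q_I \neq p_2$, so that $\tilde C$ is smooth at $q_I$ and its proper transform satisfies $C \cdot E_I = 1$. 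Since $m_I = 1$ for each, this family contributes $2k+2$; every other $I$ with $2\notin I$ has $\operatorname{codim} \geq 3$ and hence $C\cdot E_I = 0$.

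The delicate case is the tower of exceptional divisors over $p_2$, i.e.\ the $I$ with $2\in I$, for which $m_I \neq 0$ precisely when $I \subseteq \{2,\dots,k+3\}$ and $1 \leq |I| \leq k$. Here I would work in the blow-up of $p_2$, identifying $E_{\{2\}} \cong \mb{P}(T_{p_2}\mb{P}^{k+2}) = \mb{P}^{k+1}$; the proper transform of $\langle p_i\rangle_{i\in I}$ meets $E_{\{2\}}$ in the subspace $\mb{P}^{|I|-2}$ of its tangent directions at $p_2$, and $E_I$ lies over this subspace. Because $\tilde C \subset P$, all tangent directions of $\tilde C$ at $p_2$ lie on the single line $\ell_P = \mb{P}(T_{p_2}P) \subset E_{\{2\}}$, a general line of $\mb{P}^{k+1}$. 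A general line meets $\mb{P}^{|I|-2}$ only if $1 + (|I|-2) \geq k+1$, i.e.\ $|I| \geq k+2$, which never happens for $|I|\leq k$. Hence $\ell_P$ is disjoint from every such $\mb{P}^{|I|-2}$, so $C$ meets none of the $E_I$ with $2 \in I$, $|I|\geq 2$, giving $C\cdot E_I = 0$ for all of them; while for $I=\{2\}$ we get $C \cdot E_{\{2\}} = \operatorname{mult}_{p_2}(P\cap S) = k$, a general plane section through a point of multiplicity $k$. Thus the tower over $p_2$ contributes $m_{\{2\}}\cdot(C\cdot E_{\{2\}}) = k\cdot k = k^2$.

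Assembling the two contributions gives $\sum_I m_I (C\cdot E_I) = k^2 + (2k+2)$, whence $C \cdot D_k = (k+1)^2 - k^2 - (2k+2) = -1$, as claimed. The main obstacle is the analysis over $p_2$: one must verify both that the higher nested divisors $E_I$ ($2 \in I$, $|I|\geq 2$) are genuinely disjoint from $C$ — this is the tangent-direction argument via the general line $\ell_P$ — and that the value $C\cdot E_{\{2\}} = k$ is unaffected by the subsequent blow-ups of Kapranov's iterated construction, which holds precisely because $\ell_P$ avoids the proper transforms of all higher centers inside $E_{\{2\}}$. Everything else is a transversality statement guaranteed by the genericity of $P$.
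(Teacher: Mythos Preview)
Your argument is correct and follows the same computation as the paper: push $C\cdot H$, $C\cdot E_{\{2\}}$, and the $2k+2$ transverse intersections with the codimension-$2$ spans down to $\mb{P}^{k+2}$ via $\psi_1$ to get $(k+1)^2 - k^2 - (2k+2) = -1$. Your tangent-direction analysis over $p_2$ (showing $C\cdot E_I = 0$ for $2\in I$, $|I|\ge 2$) makes explicit a step the paper leaves implicit, and is the right justification.
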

\begin{proof} By construction, the image $\psi_1(C)$ in $\mb{P}^{k+2}$ will in intersect a hyperplane in $k+1$ points; passes through $p_2$ with multiplicity $k$; and transversally intersects the $2k+2$ codimension 2 linear spans $\langle p_i \rangle_{i \in I}$ for $|I|=n+1$, $2 \notin I$, and $|I \cap \{k+4,k+5\}|=1$ that contribute to the class of $D_k$.  Hence $C \cdot D_k = (k+1)(k+1) -(k)(k) - (1)(2k+2) = k^2+2k+1-k^2-2k-2=-1.$\end{proof} 

\begin{Lemma} A general curve $C$ in the covering family $\mathcal{F}$ of $D_k$ is irreducible. 
\end{Lemma}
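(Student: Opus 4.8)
The plan is to reduce the assertion to an irreducibility statement about a plane curve, and to settle that by controlling the lines through $p_2$ lying on $S$. Since $\psi_1$ is a birational morphism, the proper transform of an irreducible curve $G\in\mathcal G$ not contained in the exceptional locus of $\psi_1$ is again irreducible; so it suffices to show that for a general $2$-plane $\Pi$ through $p_2$ the section $G=\Pi\cap S$ is irreducible. Here $S=\psi_1(D_k)$ is irreducible (as $D_k$ is irreducible and $\psi_1|_{D_k}$ is birational onto its image), it is a hypersurface of degree $k+1$, and $\operatorname{mult}_{p_2}S=k$. Consequently, for general $\Pi$ the section $G=V(f)\subset\Pi\cong\mb P^2$ is a plane curve of degree $k+1$ whose multiplicity at $p_2$ is exactly $k$.

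First I would show that reducibility of $G$ forces a line through $p_2$ to be a component. If $f=h_1h_2$ nontrivially, then multiplicities at $p_2$ add, $\operatorname{mult}_{p_2}h_1+\operatorname{mult}_{p_2}h_2=k$, while $\deg h_1+\deg h_2=k+1$. Since $\operatorname{mult}_{p_2}h_i\le\deg h_i$, the two nonnegative quantities $\deg h_i-\operatorname{mult}_{p_2}h_i$ sum to $1$, so exactly one factor, say $h_1$, satisfies $\operatorname{mult}_{p_2}h_1=\deg h_1\ge 1$. A plane curve whose multiplicity at a point equals its degree is a union of lines through that point; hence $G$ contains a line $L\subset S$ through $p_2$.

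It then remains to show that a general $\Pi$ contains no line through $p_2$ lying on $S$. Let $B\subset\mb P^{k+1}$ be the set of directions $\theta$ for which the line $L_\theta$ through $p_2$ is contained in $S$; this is closed, and $\bigcup_{\theta\in B}L_\theta$ is a closed subvariety of $S$. Were this union all of $S$, then $S$ would be a cone with vertex $p_2$, forcing $\operatorname{mult}_{p_2}S=\deg S=k+1$, contrary to $\operatorname{mult}_{p_2}S=k$. Thus the union is a proper closed subvariety of the irreducible $(k+1)$-fold $S$, whence $\dim B\le k-1$, i.e. $\operatorname{codim}_{\mb P^{k+1}}B\ge 2$. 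Under projection from $p_2$, the $2$-planes through $p_2$ correspond to lines in $\mb P^{k+1}$ and the family $\mathcal G$ to a general such line $\ell$; since $\dim B\le k-1$, a general $\ell$ misses $B$. Therefore general $\Pi$ contains no line through $p_2$ on $S$, so by the previous step $G=\Pi\cap S$ is irreducible, and its proper transform $C\in\mathcal F$ is irreducible.

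The main obstacle is the bound on $\dim B$, i.e. ruling out that $S$ is swept out by lines through $p_2$; the inequality $\operatorname{mult}_{p_2}S=k<k+1=\deg S$ dispatches this cleanly, since a hypersurface covered by lines through a point is a cone over that point and a cone has multiplicity equal to its degree at the vertex. I would also note that the finitely many lines $\ol{p_2p_i}$ joining $p_2$ to the other multiplicity-$k$ points $p_3,\dots,p_{k+3}$ do lie on $S$ (for $k\ge 2$ a line meeting $S$ with total intersection multiplicity $2k>k+1$ must be contained in $S$); these contribute only finitely many points of $B$ and are harmlessly avoided by a general $\ell$, consistent with $\dim B\le k-1$.
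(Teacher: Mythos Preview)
Your proof is correct and follows essentially the same approach as the paper's: reduce to irreducibility of the plane section $G=\Pi\cap S$, show that any factorisation of $G$ forces a component consisting of lines through $p_2$ (via the degree/multiplicity bookkeeping $\sum(\deg h_i-\operatorname{mult}_{p_2}h_i)=1$), and rule this out for general $\Pi$ by bounding the dimension of the cone of lines through $p_2$ on $S$ using $\operatorname{mult}_{p_2}S=k<k+1=\deg S$. Your final remark about the chords $\overline{p_2p_i}$ is a pleasant sanity check not present in the paper, but otherwise the arguments coincide.
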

\begin{proof} Note that it will suffice to prove that $\psi_1(C)$ is irreducible, i.e. that a general curve in $\mathcal{G}$ is irreducible. Let $T$ be the union of all lines through $p_2$ that are contained in $S$. Note that $T$ must have codimension at least $2$, since otherwise $S$ contains a codimension $1$ cone over $p_2$ and by irreducibility $S$ itself is a cone over $p_2$. However, $p_2$ is a point of multiplicity one less than the degree of $S$, so this is a contradiction. \par

 If we consider the map $\pi\colon \Pj^{n+2} \sm \{p_2\} \ra \Pj^{n+1}$ that projects from the point $p_2$, the image of $T$ is a subvariety of codimension at least $2$ and the image of a $2$-plane through $p_2$ is a line. Hence, for a general $2$-plane $h$ containing $p_2$, $\pi(h) \cap \pi(T) = \0$. We can reformulate this statement as follows: for a general $2$-plane $h \subset \Pj^{k+2}$ containing $p_2$, the curve $S \cap h$ contains no line through $p_2$. 

Now, for a contradiction, suppose that the intersection of a general $2$-plane with $S$ is reducible. Then the plane curve obtained via intersection is the union of a curve $g_1$ of degree $m_1$ and a curve $g_2$ of degree $m_2$, for $m_1,m_2 \geq 1$. Without loss of generality, $p_2$ is a point of multiplicity $m_1$ on $g_1$. But then $g_1$ is a union of lines through $p_2$. \end{proof} It is shown in the next section that the preceding two lemmas imply

\begin{Corollary}\label{corcor} For each $k$, $D_k$ generates an extremal ray of the effective cone of $\M_{0,k+5}$. \end{Corollary}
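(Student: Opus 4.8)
The plan is to deduce the Corollary from the standard covering-family criterion for extremality, which is precisely what the two preceding lemmas were designed to feed. First I would record what they supply: $D_k$ is an irreducible effective divisor (it is $\Lambda_{(k,1,-1,\dots,-1)}$, irreducible by \ref{classeqs}), and $\mathcal{F}$ is a family of irreducible curves sweeping out $D_k$ whose general member $C$ satisfies $C \cdot D_k = -1$. The negativity $C \cdot D_k < 0$ together with irreducibility of $C$ already forces $C \subseteq D_k$, consistent with the construction; it is this negativity-plus-covering package that drives the whole argument.

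As an engine, I would first establish rigidity. If $D'$ is any effective divisor with $[D']=[D_k]$, then for a general $C \in \mathcal{F}$ we have $D' \cdot C = D_k \cdot C = -1 < 0$, and since $C$ is irreducible this forces $C \subseteq \mathrm{Supp}(D')$. As $C$ ranges over $\mathcal{F}$ the curves fill up a dense subset of $D_k$, so $D_k \subseteq \mathrm{Supp}(D')$; because $D_k$ is prime and $D'$ has the same class, we conclude $D'=D_k$. Thus $D_k$ is the unique effective representative of its class.

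For extremality I would argue that every decomposition is trivial. Suppose $[D_k] = [A]+[B]$ with $A,B$ effective. Pairing with a general $C \in \mathcal{F}$ gives $A \cdot C + B \cdot C = -1$, so one of them, say $A$, satisfies $A \cdot C < 0$; irreducibility of $C$ gives $C \subseteq \mathrm{Supp}(A)$ and hence, by covering, $D_k \subseteq \mathrm{Supp}(A)$. Writing $A = a D_k + A'$ with integer $a \geq 1$ and $A'$ effective not containing $D_k$, substitution yields the numerical relation $[B] + (a-1)[D_k] + [A'] = 0$. Pairing against $h^{N-1}$ for an ample class $h$ on $\M_{0,k+5}$ ($N = \dim \M_{0,k+5}$), against which every nonzero effective class is strictly positive while the relation sums to zero, forces $A'=0$, $a=1$, and $B$ numerically trivial. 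Hence $[A]=[D_k]$ and $[B]=0$: the only decompositions are the proportional ones, so $[D_k]$ spans an extremal ray of $\Eff(\M_{0,k+5})$.

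The bookkeeping above is exactly the ``well-known'' implication, so the one place I would be careful—and what I regard as the genuine obstacle—is the covering step: one must know that a general member of $\mathcal{F}$ passes through a general point of $D_k$ (equivalently, that the $2$-planes through $p_2$ sweep out $S=\psi_1(D_k)$) and that the relevant curves are honestly irreducible. Both are precisely the content secured by the two lemmas. Granting that, the passage from ``negative on a covering family'' to ``rigid and extremal'' is formal, and I would cite it as the standard criterion (as used in \cite{CT}).
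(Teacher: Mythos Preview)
Your proposal is correct and matches the paper's approach exactly: the paper's entire proof is the sentence ``It is well known that the preceding two lemmas imply'' the Corollary, i.e.\ it simply invokes the standard covering-family criterion, and you have spelled out that criterion's proof. One cosmetic point: when you write $A=aD_k+A'$ ``with integer $a\geq 1$'', for extremality in the real effective cone you should allow $A,B$ to be effective $\mathbb{R}$-divisors and $a>0$ real---the argument then goes through unchanged, using $A'\cdot C\ge 0$ and $B'\cdot C\ge 0$ (since neither contains $D_k$) to force $a+b=1$ and $A'=B'=0$.
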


We now verify that $D_k$ is not linearly equivalent to a hypertree divisor or hypertree divisor pull-back for $k \geq 2$. To this end, consider the class of $\pi_{k+6}\I(D_k) \subset \M_{0,k+6}$ with respect to the index $k+6$ Kapranov basis. By \ref{classeqs}, 
$$ \pi_{k+6}\I(D_k) \sim (2k+1)H - \dots.$$ From the proof of \ref{htpoly}, a hypertree divisor on less than or equal to $t$ vertices is specified by a polynomial of degree at most $t-3$. Hence, given a hypertree divisor or hypertree divisor pull-back $D_\Gamma \subset \M_{0,k+5}$, we have that $$\pi_{k+6}\I(D_\Gamma) \sim sH - \dots$$ where $s \leq k+2$. Evidently $s=2k+1$ is impossible unless $k=1$.

%----------------- Conditions -------------------------------

\section{Covering families of curves and conditions for extremality}\label{conditions}
Results of this section will imply \ref{corcor}. In fact, a sufficient result for \ref{corcor} is proved in \cite[4.1]{CC}: Chen and Coskun show that if $D$ is an irreducible divisor and there exists an irreducible curve $C$ so that $C \cdot D <0$ and $D$ is covered by irreducible curves numerically equivalent to $C$, then $D$ generates an extremal ray of the pseudoeffective cone. \par
We prove a slightly stronger result: under the same hypotheses, $D$ generates an ``edge" of the pseudoeffective cone. Roughly, this means that $D$ is extremal and additionally the boundary of the pseudoeffective cone is not rounded near the ray generated by $D$. The proof, completed in \ref{edgefact}, follows from two lemmas of convex geometry. We first set up some notation; throughout we use standard Euclidean notions of distance, boundedness, etc. on $\R^N$ with the usual coordinates.\par

Given a convex cone $X \subset \R^N$, we say that $v \in X$ is an {\bf edge for $X$} if there exist linear functions $h_1, \dots, h_{N-1}$ so that \begin{equation}\label{h1} \bigcap_{i=1}^{N-1} \{ h_i = 0\} = \langle v \rangle \end{equation} and \begin{equation}\label{h2}X \subset \bigcap_{i=1}^{N-1} \{ h_i \geq 0\}.\end{equation} We say that $v \in X$ is {\bf extremal} if $v = a_1 w_1+a_2w_2$ for $a_1,a_2 \geq 0$ and $w_1,w_2 \in X$ implies that $w_1$ and $w_2$ are proportional to $v$. \par

\begin{Lemma}\label{L1} If $X$ is a convex cone in $\R^N$ and $v \in X$ is an edge, then $v$ is extremal. \end{Lemma}

\begin{proof} If $v = a_1 w_1 + a_2 w_2$, then $0=h_i(x)=a_1 h_i(w_1) + a_2 h_i(w_2)$ for each $1 \leq i \leq N-1$. Since $w_i \in X$, $h_i (w_i) \geq 0$ for each $i$ by \eqref{h2}, and since $a_j \geq 0$ we must have $h_i(w_1) = h_i(w_2)=0$ for all $i$. Hence by \eqref{h1}, $w_j \in \langle v\rangle$ as desired.   \end{proof}

Given a collection of points $V \subset \R^N$, let $\C(V)$ denote the closure of the convex hull of all non-negative multiples of elements in $V$. This is, in particular, a closed convex cone in $\R^N$.

\begin{Lemma}\label{L2} Given $V \subset \R^N$and $v \in V$, suppose that \begin{itemize}
\item[(a)] There exists a linear function $\sigma$ so that $\sigma(x) <0$ for $x \in V$ if and only if $x$ is a positive multiple of $v$; 
\item[(b)] There exists an affine hyperplane $0\notin H \subset \R^N$ with $H \cap  \C(V)$ nonempty and bounded. \end{itemize}
Then $v$ is an edge of $\C(V)$. Moreover, $v$ is extremal.  \end{Lemma}

\begin{proof} 

We assume throughout that $V$ contains at least two points that are not multiples of each other, since the lemma is clear when $V$ is a ray or line. \par

For any $x \in V$, some positive multiple of $x$ lies in $H$. To see this, note that without loss of generality $H = \{ z \col g(z) = 1 \}$ for some linear function $g$. If $\lambda x \notin H$ for all $\lambda \geq 0$, we must have $g(x) =b \leq 0$. Take any $y \in H \cap \C(V)$ not a multiple of $x$. Such a $y$ exists by the assumption that $V$ contains linearly independent points. If $b=0$, then $y + \lambda x \in C(V) \cap H $ for all $\lambda >0$, which contradicts boundedness of $H \cap \C(V)$. \par

 If $b<0$, let $\lambda_1 > 1$ and let $\lambda_2= \frac{\lambda_1 -1}{|b|}>0$. Then $ \lambda_1 y + \lambda_2 x \in H \cap \C(V)$, since $\lambda_1, \lambda_2>0$ and $g(\lambda_1 y + \lambda_2 x) = 1$. Since $x$ and $y$ are linearly independent and $\lambda_2 \ra \infty$ as $\lambda_1 \ra \infty$, this gives an unbounded sequence in $H \cap \C(V)$, again contradicting (b). Let $K:= \C(V) \cap H$. We have shown that $K$ is closed, bounded, and convex, and that $$ \C(V) = \{ \lambda x \col x \in K, \lambda \geq 0 \}.$$   These observations will be used later. \par

Now let $T$ denote the subspace $\{ y \col \sigma(y) =0 \} \subset \R^N$, where $\sigma$ is supplied by (a).
 \begin{Claim}\label{c1} There exists a basis $\{v,x_1, \dots, x_{N-1}\}$ for  $\R^N$ so that, if we let $h_i$ denote the coordinate function naturally associated to the element $x_i$ of the basis $\{v,x_i\}$:

\begin{itemize} \item $x_i \in T$ for $1 \leq i \leq N-1$. \item $K \cap T \subset \bigcap_{i=1}^{N-1} \{ h_i \geq 0 \}.$  \end{itemize} \end{Claim} 

Given the claim, since $\C(V) \cap T$ consists of non-negative multiples of elements of $K\cap T$, it follows that $\C(V) \cap T$ also lies in this intersection of half-spaces. With notation as in the claim, we have that $$V \subset \bigcap_{i=1}^{N-1} \{ h_i \geq 0 \}.$$ Indeed, let $y \in V \sm \{v\} $. Then $$y = -\alpha v + \sum_i a_i x_i$$ for some uniquely determined coefficients $\alpha, a_i$. Moreover, $$\sigma(y) =-\alpha  \sigma( v) \geq 0$$ by assumption (a). Since $\sigma (v)<0$, we must have $\alpha \geq 0$. Then $\alpha v + y = \sum_i a_i x_i \in \C(V) \cap T$, so that $a_i \geq 0$ by choice of the basis $x_i$.  \par This shows that $$\C(V) \subset \bigcap_{i=1}^{N-1} \{ h_i \geq 0\}$$ and $$\bigcap_{i=1}^{N-1} \{ h_i =0 \} = \langle v \rangle,$$ so that $v$ is an edge for $\C(V)$ as desired. \par

To prove the claim \ref{c1}, let $K_0:=K \cap T$. Since $K=\C(V) \cap H$, we have that $$K_0 \subset H \cap T = \{ x \in T \col g(x) = 1 \},$$ where as before $g$ is a linear function on $\R^N$ so that $H = \{ x \in \R^N \col g(x) = 1\}$. This subset is nonempty, since $\sigma(v)<0$ but $\sigma(x)\geq 0$ for some $x \in V$, so that we can find $x' \in C(V)$ with  $\sigma(x')=0$. As argued previously, some positive multiple of $x'$ lies in $H$, therefore in $K_0$. \par

 If we take $T_0$ to be the $(N-2)$-dimensional subspace of $T$ where $g$ vanishes, we have $T_0 \cap K_0 = \0$. Let $x_1'$ be a normal vector to $T_0$ with $g(x)>0$. Let $x_2', \dots, x_{N-1}'$ be a basis for $T_0$. Then let $h_i'$ denote the coordinate functions associated to this basis, and by boundedness of $K_0$ we have that for each $i$, $h_i'(K_0) \subset [a_i, b_i]$ for some finite $a_i,b_i$. Note that by assumption $a_1 >0$. Now define new coordinates by $x_i = x_i'$ for $i \geq 2$, and $$x_1 = x_1' +\sum_{\substack{i=2 \\ a_i <0}}^{N-1} \frac{a_i}{a_1}x_i'.$$  If $y \in K_0$, then $ y = \lambda_1 x'_1 + \sum_{i=2}^{N-1} \lambda_i x'_i$ for $\lambda_i \geq a_i$. Substituting to express $y$ with respect to the basis $\{x_i\}$, we obtain
$$y = \lambda_1x_1 + \sum_{\substack{i=2 \\ a_i<0}}^{N-1} (- \frac{\lambda_1}{a_1} a_i +\lambda_i)x_i + \sum_{\substack{i =2 \\ a_i \geq 0}}^{N-1} \lambda_i x_i.$$  All coordinates of vectors in $K_0$ are positive with respect to the new basis, since $- \frac{\lambda_1}{a_1} a_i +\lambda_i >- \frac{\lambda_1}{a_1} a_i +a_i >0 $ for $a_i <0$. \par
The second assertion of the lemma is immediate from \ref{L1}. \end{proof}

\begin{Corollary}\label{edgefact} Given an irreducible effective divisor $D$ on a smooth projective variety $X$ and an irreducible covering family of curves $C$ so that $C \cdot D <0$, the divisor $D$ generates an edge of the effective cone and is therefore extremal. \end{Corollary}
\begin{proof}If we let $H$ denote the class group of $X$ modulo numerical equivalence and identify $\NS(X)= H \otimes \R$ with $\R^N$ for suitable $N$, then $ \Eff(X)= \C(S)$ where $S \subset \NS(X)$ is the set of irreducible effective divisor classes. \par  For any irreducible divisor $D'$, we may choose an irreducible curve $C'$ in the family which does not lie in the intersection of the $D'$ and $D$. Then $C' \cdot D'\geq 0$. This shows that (a) from \ref{L2} is satisfied. It is a well-known fact that the pseudoeffective cone has nonempty, bounded intersection with an appropriately chosen affine hyperplane, so (b) is also satisfied. \end{proof}

%------More Examples ------------------------------------------
\section{Rigid examples and non-extremal examples}\label{rex}

We say that a divisor $D$ is {\bf rigid} if $h^0(kD)= \dim H^0(\mathcal{O}_X(kD))=1$ for all $k\geq 1$. Before commencing with examples, we record the following 

\begin{Lemma}\label{rigiditycrit} Suppose that $X$ is a smooth projective variety, and $D \subset X$ is an irreducible effective divisor with a covering family $\mathcal{F}$ of irreducible curves so that $C \cdot D <0$ for $C \in \mathcal{F}$. Then $D$ is rigid. \end{Lemma}
\begin{proof} See \cite[4.1]{CC} \end{proof}

\begin{Theorem} Given $\z=(a_1, 1,\dots,1,-1,\dots,-1)$, with $a_1>1$,  the divisor $\Lambda_\z$ on $\M_{0,N+2}$ is rigid. Here, $N=a_1 + 2m+1$ and $m$ equals the number of positive $1$'s appearing in the entries of $\z$.  \end{Theorem}
\begin{proof} We proceed by induction on $m$. If $m=1$, then $\Lambda_\z$ is one of the counterexamples provided in \ref{counterex}, and hence there exists an irreducible covering family of curves for the divisor with negative intersection pairing; \ref{rigiditycrit} implies that $\Lambda_\z$ is rigid. \par

Now suppose that  $m>1$. If $ \bf b:= (a_1+1,1, \dots,1,1,-1,-1,\dots,-1)$, by \ref{restricting} we have that $\Lambda_\z \cap \delta_{\{1,2\}} \simeq \Lambda_\zb$ when $\delta_{\{1,2\}}$ is naturally identified with $\M_{0,N+1}$. By induction $\Lambda_\zb$ is rigid. We have an exact sequence

$$0 \lra \mathcal O_{\M_{0,N+2}}(k\Lambda_\z - \delta_{\{1,2\}})\lra \mathcal O_{\M_{0,N+2}}(k \Lambda_\z) \lra \mathcal O_{ \delta_{\{1,2\}} }(k\Lambda_\zb) \lra 0,$$ which gives a long exact sequence in cohomology

$$0 \lra H^0(\mathcal O_{\M_{0,N+2}}(k\Lambda_\z - \delta_{\{1,2\}}))\lra H^0(\mathcal O_{\M_{0,N+2}}(k \Lambda_\z)) \lra H^0( \mathcal O_{ \delta_{\{1,2\}}}(k\Lambda_\zb)) \lra \dots $$ 

Since $\dim H^0( \mathcal O_{ \delta_{\{1,2\}} }(k\Lambda_\zb)) = h^0(k\Lambda_\zb)=1$, it will suffice to show $k \Lambda_\z-\delta_{ \{0,1\} }$ is not effective so that $H^0(\mathcal O_{\M_{0,N+2}}( k \Lambda_\z-\delta_{ \{0,1\} })) =0$. To do this, we exhibit a family of irreducible curves so that \begin{enumerate}
\item[i.] For $C$ in the family, $C \cdot (k \Lambda_\z - \delta_{ \{1,2\} }) = -1$ 
\item[ii.] For a general point of $\M_{0,N+2}$, some curve in the family passes through the point. \end{enumerate} Given the above, the divisor $k \Lambda_\z - \delta_{\{1,2\}}$ cannot be effective: a codimension one subvariety with class $k\Lambda_\z - \delta_{ \{1,2\}}$ must contain each curve $C$ in the family, an absurdity since the curves cover an open subset of $\M_{0,N+2}$. \par

We now construct a family of curves satisfying (i) and (ii). Using formulas for classes with respect to the Kapranov basis in index $1$, given in \ref{classeqs}, we have that
$$\Lambda_\z \sim AH - (A-1) E_2 - (A-1)E_{N}- E_{J^0} - E_{J^1}+...$$ 
where $J^i := \{ 2,\dots, N\} \sm \{2,N,N+1+i\}$ for $i=0$ or $i=1$, and $A := a_1 + 2m-1 $. Other terms contribute to the class, but these are linearly independent and irrelevant.\par

The significance of the terms $E_{J^i}$ is that under $\psi_1$, $E_{J^i}$ is mapped to a codimension $2$ span not containing $p_2$ and not containing $p_{N}$ (these points correspond to $E_2$ and $E_{N}$ under the index $1$ Kapranov map). Given a point $y \in \Pj^{N-1}$ not lying on the lined spanned by $p_2$ and $p_N$, consider the two-plane $T_y=\langle y, p_2, p_N\rangle$. If $y \notin \psi_1(E_{J^i})$, $i=1,2$, there exist a unique $x_i \in \psi_1(E_{J^i}) \cap T_y$.

\begin{Claim}\label{7.2} For a general point $y \in \Pj^{N-1}$, the points $x_i,p_j,$ and $y$ as constructed above lie in general position in $T_y \simeq \Pj^2$. \end{Claim}
 Given this, for general $y$ we have a unique irredicuble conic $C_y \subset T_y$ passing through all five points, and $C_y$ has class $2h + e_2+e_{N} + e_{J^1} + e_{J^0}$ with respect to the dual of the index $1$ Kapranov basis. Pairing with $\Lambda_\z$, we see that $$C_y \cdot \Lambda_\z = 2A - 2(A-1) -2 =0.$$ Since $C_y \cdot \delta_{\{1,2\}}=1$, it follows that $$C_y \cdot (k \Lambda_\z - \delta_{ \{1,2\}} )= -1.$$ Since $C_y$ can be defined for a general point $y$ in $\Pj^{N-1}$, taking the proper transforms of $C_y$ under the $\psi_1$ gives a family of curves in $\M_{0,N}$ satisfying (i) and (ii) above. \par

To prove the claim \ref{7.2}, we first show that $x_0, x_1,$ and $p_i$ are non-collinear for general $y$ and $i \in \{2,N\}$. Consider projection from $p_2$ to $\Pj^{N-2}$. Let $L$ be the image of $T_y$, and $M_i$ be the image of the linear span $\langle p_j \rangle_{j \in J^i}$. Each $M_i$ is of codimension $1$, and $L$ is of dimension $1$. Hence $L \cap M_i$ consists of a single point for each $i$. If $x_0, x_1,$ and $p_2$ are collinear, then $L \cap M_0 = L \cap M_1$. Composing our first projection with a second projection from $p_N$ to obtain a map $\pi \colon \Pj^{N-1} \ra \Pj^{N-3}$, we see that $M_0 \cap L = M_1 \cap L$ occurs if and only if $\pi(y)$ lies in the image of $\langle p_j \rangle_{j \in J^0 \cap J^1}$ under $\pi$, which is of codimension $1$ in $\Pj^{N-3}$.

Note that $p_2, p_N,y$ and $p_2, p_N, x_i$ are non-collinear for general $y$. So, to conclude that the points are in general position, it suffices to verify that $y,x_i,p_j$ are non-collinear for each $i \in \{2,N\}$ and $j \in \{0,1\}$;  By symmetry, we may check only for $i=2$, $j=0$. Note that $x_0$, $y$, and $p_2$ are collinear if and only if the image of $y$ under projection from $p_2$ lies in the image of $\psi_1(E_{J^0})$ under projection. Since $2 \notin J^0$, this image is of codimension $1$ and a general point $y$ is not contained.  \end{proof}

\begin{Remark} While rigidity is not known to imply extremality on $\M_{0,n}$, we are unaware of any examples of rigid, non-extremal divisors on the space in question. Rigidity of a divisor class $D$ implies that $D$ cannot be written as a non-negative linear combination of effective divisors with rational coefficients; for extremality, we must have that $D$ cannot be written as a non-negative linear combination of pseudo-effective divisors. \end{Remark}

We now turn our attention to a class of Chen-Coskun divisors that can be written as linear combinations of other effective divisors, and so are non-rigid and non-extremal. Consider an $n$-tuple of nonzero integers $\z=(a_1, \dots ,a_n)$ with $\sum_i a_i=0$; assume that $a_1 >0$ and $a_n<0$. Define $\tilde{\z} = (a_1 +1,a_2,\dots,a_{n-1}, a_n-1)$. $\Lambda_\z$ and $\Lambda_{\tilde {\bf a}}$ both defined Chen-Coskun divisors on $\M_{0,n+2}$. We compare their classes with respect to the Kapranov basis in index $1$. From class formulas, we see that 

$$\Lambda_{\tilde {\bf a}} = (\sum_{i=2}^n |a_i|) H + ....,$$

and 
$$\Lambda_\z =\big( (\sum_{i=2}^n |a_i|)-1 \big)H + ....$$

Furthermore, we claim that the coefficient of $E_J$ in the class of $\Lambda_\z$ and $\Lambda_{\tilde {\bf a}}$ will be the same whenever $n \in J$: given an $n$-tuple ${\bf b}=(b_i)$ satisfying the appropriate conditions, the coefficient of $E_J$ in the class of $\Lambda_\zb$ is a function of $b_i$ for $i \notin J$ and $i \neq 1$. Since $a_i$ and $\tilde a_i$ agree for $i \neq 1$ and $i \neq n$, the claim follows. \par

With these preliminary observations, we can conclude that if $\Lambda_\z = dH - \sum_I m_I E_I$ and $\Lambda_{\tilde {\bf a}} = eH -\sum_I n_I E_I$, then 

$$\Lambda_{\tilde {\bf a}} - \Lambda_\z = H - \sum_{n \notin I} (n_I -m_I) E_I.$$

Applying \ref{restricting}, we see that for $n,n+1, n+2 \notin J$:

\begin{equation}\label{X1}n_I -m_I = |a_n -1| - |a_n| = 1, \end{equation} and for $n \notin J$, $|\{ n+1, n+2 \cap J\} | =1$, we have that 
$$ n_I - m_I =$$
\begin{equation}\label{X2} \min\bigg \{ \sum_{ \substack{i \notin J\cup \{n\} \\ a_i \leq 0}} \!\! |a_i| + |a_n|+1, \sum_{ \substack{i \notin J \\ a_i \geq 0}}|a_i|\bigg\} - \min \bigg\{ \sum_{ \substack{i \notin J \\ a_i \leq 0}}|a_i|, \sum_{ \substack{i \notin J \\ a_i \geq 0}}|a_i|\bigg\}.\end{equation} If $|a_n| \geq \sum_{ a_i \geq 0}|a_i|,$ then both minima are equal to the positive sum, and the coefficient of $E_J$ is zero. From this, we easily obtain

\begin{Theorem}\label{7.5} For $\z = (a_1, \dots, a_n)$ with $a_i$'s nonzero, $\sum_i a_i=0$, $a_1 >0$, and $a_n<0$, define $\tilde \z = (a_1+1, a_2, \dots, a_{n-1},a_n-1)$. If $$|a_n| \geq \sum_{\substack{a_i \geq 0 \\ i \geq 2}} |a_i|,$$ then $\Lambda_{\tilde {\bf a}} = \Lambda_\z + D$, where $D$ is an effective sum of boundary divisor classes. In particular, $\Lambda_{\tilde {\bf a}}$ is not extremal. \end{Theorem} 

\begin{proof} By the above discussion, $$\Lambda_{\tilde {\bf a}}= \Lambda_\z + H - \sum_{\substack{n \notin J \\ n+1,n+2 \notin J}} E_J.$$ However,

$$ H - \sum_{\substack{n \notin J \\ n+1,n+2 \notin J}} E_J = \delta_{n+1,n+2} + \sum_{\substack{n+1,n+2 \in J \\ n \in J }} E_J$$ which is effective.
\end{proof}

\begin{Example}\label{7.6} If $k \geq dm$ for $k,d,m$ positive integers with $\gcd(k,m)=1$, let $\z(k,d,m)$ be the $2d+2$-tuple $(k, m,m,m, \dots, -m,-m,-m,\dots, -k)$. Then $\Lambda_{\z(k,d,m)} \subset \M_{0,2d+4}$ is not extremal. Indeed, this follows immediately from \ref{7.5}, since the hypothesis that $|a_{2d+2}|>1$ and $$|a_{2d+2}|  \geq \sum_{\substack{ i \geq 2 \\ a_i >0 }} |a_i|$$ is satisfied. \end{Example}
\begin{Remark} The hypothesis of \ref{7.5} are not necessary for non-extremality. The  divisors $L_k$ from \ref{LM} give another class of non-extremal Chen--Coskun divisors, but these do not satisfy the hypotheses of \ref{7.5}. A proof of this is roughly as follows: it was observed in \ref{LM} that for appropriate choice of Kapranov basis, there exist indices $i$ and $j$ so that $E_I$ has zero coefficient in the class of $L_k$ whenever $i \in I$ or $j \in I$. Hence these divisors can be realized as pull-backs of non-boundary (and hence non-extremal) divisors from appropriate Losev-Manin spaces \cite{LM}. The argument generalizes to any Chen--Coskun divisor corresponding to an $n$-tuple with only one positive entry.
\end{Remark} 

We now return to the implications of \ref{7.5}. For fixed $d$, all but finitely many divisors $\Lambda_\z$ on $\M_{0,2d+2}$ for $a$ of the form $(k,m,m,m,\dots, -k, -m,-m,-m, \dots)$ are non-extremal. This is in contrast with the results of \cite{CC}, where divisors on $\M_{1,4}$ arising from $4$-tuples of the form $(k,m,-k,-m)$ were shown to be extremal and yielded the result that $\Eff(\M_{1,n})$ is not finitely generated. These particular $n$-tuples could not yield distinct extremal divisors on $\M_{0,6}$, since $\overline{\operatorname{Eff}}(\M_{0,6})$ is generated by the spherical bipyramid divisor together with boundary classes. In particular, the divisors on $\M_{0,6}$ corresponding to $(k,m,-k,-m)$ are extremal if and only if $k=m=1$.  However, a natural question is whether many extremal rays might arise from ``analogous" $n$-tuples with $n$ sufficiently large. The above discussion rules out certain generalizations. \par 

Moreover, \ref{7.5} provides an obstruction to the construction of ``large families" of extremal Chen--Coskun divisors on $\M_{0,n}$ for $n$ fixed. More precisely, obvious schemes for constructing infinite families of Chen--Coskun divisors can provide only finitely many extremal examples.  For instance, fixing all but two indices of a given $n$-tuple and varying these can yield an extremal divisor for only finitely many choices, since after some point one of the variable entries will become large enough in absolute value so that \ref{7.5} guarantees the divisor is non-extremal. However, more innovative approaches to varying $n$-tuples coupled with finer analysis of combinatorial constraints might yield interesting results.

\end{document}